\numberwithin{equation}{section} 
\newtheorem{theorem}{Theorem}[section]
\newtheorem{proposition}[theorem]{Proposition}
\newtheorem{lemma}[theorem]{Lemma} 
\newtheorem {corollary}[theorem]{Corollary}
\newtheorem{definition}[theorem]{Definition}
\newcommand{\dD}{{\mathcal D}}
\newcommand{\pP}{{\mathcal P}}
\newcommand{\rR}{{\mathcal R}}
\newcommand{\tT}{{\mathcal T}}
\newcommand{\tc}{\textcolor}
\newcommand{\ints}{{\mathbb Z}}
\definecolor{mycyan}{rgb}{0.4,0.8,1.0}
\definecolor{Green}{rgb}{0.2,0.7,0.2}
\title[Cluster complexes, partitions and Catalan arrangements]{Facets of the $m$-generalized Cluster complex and  
 regions in the $m$-extended Catalan arrangement of type $A_n$} 
\author[S. Fishel]{Susanna Fishel}
\address{School of Mathematical and Statistical Sciences, Arizona State University, Tempe, AZ 85287, USA}
\email{fishel@math.asu.edu}
\author[M. Kallipoliti]{Myrto Kallipoliti}
\address{Fak. f\"ur Mathematik, Universit\"at Wien, Garnisongasse 3, 1090 Wien, Austria}
\email{myrto.kallipoliti@univie.ac.at}
\author[E. Tzanaki]{Eleni Tzanaki}
\address{Department of Applied Mathematics, University of Crete, 71409 Heraklion, Crete, Greece}
\email{etzanaki@tem.uoc.gr}
\begin{document}

\begin{abstract}
In this paper we present a bijection $\omega_n$ between 
two well known families of Catalan objects: 
the set of facets of the $m$-generalized cluster complex $\Delta^m(A_n)$  
and the set of dominant regions in the $m$-Catalan arrangement ${\rm Cat}^m(A_n)$, 
where $m\in\mathbb{N}_{>0}$.  
In particular, $\omega_n$ bijects the facets containing the negative simple root $-\alpha$ to 
dominant regions having  the hyperplane 
$\{v\in V\mid\left\langle v,\alpha \right\rangle=m\}$ as separating wall. 
As a result, $\omega_n$ restricts to a bijection between 
the set of facets of the positive part of $\Delta^m(A_n)$ and the 
set of bounded dominant regions in ${\rm Cat}^m(A_n)$. 
The map $\omega_n$ is a composition of two bijections in which 
integer partitions in an $m$-staircase shape come into play. 

\end{abstract}

\maketitle
\thispagestyle{empty}

\setcounter{tocdepth}{1}


 
\section{Introduction}
\label{sec:intro}
Fomin and Zelevinsky defined the \emph{cluster complex}
$\Delta(\Phi)$, a pure simplicial complex associated to every finite root
system $\Phi$ \cite{fz-ysga-03}, as a tool for the study of finite
cluster algebras \cite{fz-caf-02}. 
The ground set of $\Delta(\Phi)$ is the set of \emph{almost positive roots}, 
$\Phi_{\geq -1}$, which consists of one copy of each negative and positive simple root. 
Fomin and Reading \cite{fr-gcccc-05} generalized it to the \emph{$m$-cluster complex}
$\Delta^m(\Phi)$, a pure simplicial complex defined for any finite
(possibly reducible) Coxeter group and nonnegative integer
$m$. The ground set of $\Delta^m(\Phi)$ is the set of \emph{colored almost positive roots}, 
$\Phi^m_{\geq -1}$, which consists of one copy of each negative simple root together 
with $m$ copies of the each positive root. 
The complex $\Delta^m(\Phi)$ coincides with $\Delta(\Phi)$ when $m=1$.  The
generalized cluster complex contains a natural subcomplex, called the
positive part of $\Delta^m(\Phi)$ and denoted by $\Delta^m_+(\Phi)$.
It consists of faces of $\Delta^m(\Phi)$ that do not contain negative simple roots. 

When $\Phi$ is irreducible, the number of facets of $\Delta^m(\Phi)$
and $\Delta^m_+(\Phi)$ is equal to the {\it $m$-Catalan} number
$N(\Phi,m)=\prod_{i=1}^n \frac{e_i+mh+1}{e_i+1}$ and
{\it positive $m$-Catalan number} $N_+(\Phi,m) =
\prod_{i=1}^n \frac{e_i+mh-1}{e_i+1}$ respectively, where $n$ is the
rank, $h$ is the Coxeter number and $e_i$ are the exponents of $\Phi$.
In the special case where $\Phi=A_{n-1}$, we obtain the well-known 
\emph{Fuss-Catalan numbers}
\[N(\Phi,m)=\frac{1}{mn+1}{(m+1)n\choose n}.\]
The Fuss-Catalan numbers count an enormous number of combinatorial
objects. For instance, they count the number of $m$-Dyck (or $m$-ballot) paths of size $n$. 
These are lattice paths from $(0,0)$ to $(mn,n)$ using north steps $(0,1)$ 
and east steps $(1,0)$ which do not go below the line 
$y=\frac{1}{m}x$. Equivalently, they count the number of integer partitions whose 
Young diagram is contained in the $m$-staircase 
shape  defined by the partition $(m(n-1),m(n-2),\dots,m)$.
In this paper we use such integer partitions as a way to encode the facets 
of the generalized cluster complex $\Delta^m(A_{n-1})$,  
as well as the dominant regions of the extended Catalan arrangement 
of type $A_{n-1}$. 

The dominant regions of type $A_n$ constitute the second object of our study. 
We now give a brief description of the dominant regions in the general case of a finite crystallographic root system $\Phi$. 
For more information on this topic, we refer the reader to \cite{ath-gcn-04, ath-rgcn-05, atz-pc-06}. 
Let $\Phi$ be a finite crystallographic root system with set of
positive roots $\Phi_{>0}$ and let $V$ be the Euclidean space spanned by the set
$\Phi_{>0}$ with inner product $\left\langle \cdot,\cdot\right\rangle$. 
The {\it $m$-Catalan arrangement} ${\rm Cat}^m(\Phi)$ is
the collection of hyperplanes $\{H_{\alpha,k} \mid \alpha \in \Phi_{>0},\ 
0 \leq k \le m \}$, where $H_{\alpha,k} = \{ v \in V \mid \left\langle
v,\alpha \right\rangle = k \}$ for $\alpha \in \Phi$ and $k \in
\ints$. The hyperplanes of
${\rm Cat}^m(\Phi)$ dissect $V$ into {\em regions}. The {\it dominant
chamber} of $V$ is the intersection $\bigcap_{\alpha \in \Phi_{>0}} \{ v
\in V \mid \left\langle v,\alpha \right\rangle \ge 0 \}$, and is
also referred to as the \emph{fundamental chamber} in the literature.
Every region contained in the dominant chamber is called a {\it
  dominant region.}  Note that the dominant regions in the $m$-Catalan
arrangement are the same as the dominant regions in the $m$-Shi
arrangement \cite{sh-klccawg-86,st-hapfti-98}. The {\em bounded}
regions are those which only contain $v\in V$ satisfying
$0<\left\langle v,\alpha\right\rangle<m$ for all simple roots $\alpha$
of $\Phi$.  When $\Phi$ is an irreducible crystallographic root
system, the number of dominant and bounded dominant regions in ${\rm
  Cat}^m(\Phi)$ is again $N(\Phi,m)$ and $N_+(\Phi,m)$ respectively. 

Summarizing, for every finite crystallographic root system we have the
same number of facets of $\Delta^m(\Phi)$ as we have dominant regions
in ${\rm Cat}^m(\Phi)$.  Moreover, the number of facets of the
positive cluster complex $\Delta^m_+(\Phi)$ is equal to the number of
bounded dominant regions in the $m$-Catalan arrangement ${\rm
  Cat}^m(\Phi)$. We seek a bijection to explain these coincidences.
For $m=1$ there exist bijections between non-crossing partitions and
cluster complexes \cite{re-ccsencp-07} and between non-crossing
partitions and dominant regions in ${\rm Cat}^{1}(\Phi)$
\cite{ast-ub-11,cm-nnnnd-11,mam-nnnn-11}.  However, for $m \geq 2$ few
results that relate the above objects are known
\cite{brt-nnpc-09,krat-ftgcc-06,tz-fgcc-08}. 
To the best of our knowledge, there exists no bijection between dominant regions in ${\rm
  Cat}^m(\Phi)$ and facets of the cluster complex $\Delta^m(\Phi)$.

In this paper we close this gap in the case where $\Phi=A_n$. 
Before we state our main result we need to fix some notation. 
Let $\dD^m(A_n)$ (or $\dD^m_n$ for short) be  
the set of facets of the $m$-generalized cluster complex $\Delta^m(A_n)$ and let $\rR^m(A_n)$ (or $\rR^m_n$ for short) 
be the set of dominant regions in the $m$-Catalan arrangement ${\rm Cat}^{m}(A_n)$. 
We recall that a {\it wall} of a region $R$ is a hyperplane in ${\rm
  Cat}^m(\Phi)$ which supports a facet of $R$. We say that $H$ is a
{\it separating wall} of $R$, if the region $R$ and the origin lie in
different half-spaces relative to $H$. Finally, for $n\in\mathbb{N}$ we set $[n]=\{1,2,\dots,n\}$. 
Here we give a combinatorial proof of the following theorem. 

\begin{theorem}
\label{propA}
Let $\{\alpha_1,\alpha_2,\dots,\alpha_n\}$ be the set of simple roots of type $A_n$. 
There exists a bijection $\omega_n:\dD^m_n\to\rR^m_n$ with
the property that the negative simple root $-\alpha_i$ is 
contained in the facet $D\in\dD^m_n$ for $i\in[n]$, 
if and only if the hyperplane $H_{\alpha_i,m}$ is a
separating wall of the region $\omega_n(D)$.
\end{theorem}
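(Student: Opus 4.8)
The plan is to realize $\omega_n$ as the composition of two bijections that both pass through the set of integer partitions whose Young diagram fits inside the $m$-staircase shape $(m(n-1), m(n-2), \dots, m)$, as announced in the abstract. First I would set up an explicit encoding $\phi_n : \dD^m_n \to \Pi^m_n$ of the facets of $\Delta^m(A_n)$ by such $m$-staircase partitions. The natural source of this is the known combinatorial model for faces of $\Delta^m(A_n)$ in terms of $(m+2)$-angulations (or $m$-divisible non-crossing objects) of a polygon; I would translate a maximal dissection into the partition recording, row by row, how many diagonals cut across each ``level,'' and verify that the positive diagonals versus the negative simple roots $-\alpha_i$ are distinguished by whether the partition touches the $i$-th outer corner of the staircase, i.e.\ whether $\lambda_i = m(n-i)$ fails or the corresponding boundary condition. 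Second I would set up the encoding $\psi_n : \rR^m_n \to \Pi^m_n$ of dominant regions of $\mathrm{Cat}^m(A_n)$ by the same partitions; here one uses the standard coordinatization of a dominant region by the tuple of integers $k_{ij} = \lfloor \langle v, \alpha_i + \dots + \alpha_j\rangle \rfloor$ for $v$ in the region, subject to the ``sub-additivity'' inequalities that cut out exactly the staircase partitions, and one checks that $H_{\alpha_i,m}$ is a separating wall precisely when the corresponding coordinate hits its maximal value $m(n-i)$ along the relevant hook.

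Then $\omega_n := \psi_n^{-1} \circ \phi_n$ is the desired bijection, and the root-to-wall compatibility is immediate from the two parallel ``corner'' characterizations, provided both encodings pin the negative simple root $-\alpha_i$ and the hyperplane $H_{\alpha_i,m}$ to the same combinatorial feature of the partition. The bounded-region and positive-part corollary then follows because a dominant region is bounded iff every $H_{\alpha_i,m}$ is a separating wall (equivalently $\langle v,\alpha_i\rangle < m$ is violated for no $i$ once one accounts for all positive roots — more precisely the region is bounded iff it is separated from the origin by $H_{\alpha,m}$ for all simple $\alpha$), which matches the condition that a facet of $\Delta^m(A_n)$ lies in $\Delta^m_+(A_n)$, namely that it contains no negative simple root.

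The key steps, in order: (1) define $\Pi^m_n$ and fix the bijection between $m$-staircase partitions and $m$-Dyck paths / diagonals, isolating the ``$i$-th corner'' statistic; (2) construct $\phi_n$ from a polygon-dissection model of $\dD^m_n$ and prove it is a bijection, tracking which partition corner encodes $-\alpha_i \in D$; (3) construct $\psi_n$ from the floor-function coordinates of a dominant region and prove it is a bijection, tracking which partition corner encodes the separating-wall condition for $H_{\alpha_i,m}$; (4) compose and read off Theorem~\ref{propA}; (5) deduce the positive-part / bounded-region restriction.

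I expect the main obstacle to be step (3): showing that the floor-coordinates of a dominant region of $\mathrm{Cat}^m(A_n)$ satisfy \emph{exactly} the inequalities characterizing Young diagrams inside the $m$-staircase — no more, no fewer — and, simultaneously, that the separating-wall condition for $H_{\alpha_i,m}$ corresponds cleanly to a single corner of the partition rather than to a more global feature. The sub-additivity ``compatibility'' inequalities among the $k_{ij}$ are the delicate point; one must argue both that they are necessary (any point of a region satisfies them) and sufficient (any partition in the staircase is realized by some region), the latter amounting to a non-emptiness argument for a polytope cut out by the corresponding hyperplane slab inequalities. Getting the wall characterization to match on the nose — so that the ``if and only if'' in Theorem~\ref{propA} is automatic once both encodings are in place — is what the careful bookkeeping in steps (2) and (3) is really for.
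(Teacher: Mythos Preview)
Your high-level architecture is exactly what the paper does: factor $\omega_n$ through the set $\pP^m_n$ of partitions fitting in the $m$-staircase, with one bijection $\psi_n:\dD^m_n\to\pP^m_n$ coming from the polygon-dissection model and one bijection $\varphi_n:\rR^m_n\to\pP^m_n$ coming from the Shi coordinates $k_{ij}$, and match the negative root $-\alpha_i$ and the wall $H_{\alpha_i,m}$ to the same feature of the partition, namely that the $i$-th part attains its maximal value $m(n-i+1)$. So the strategy is correct and not genuinely different from the paper's.

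Two points need correction, however. First, your description of the region-to-partition map is off: the $k_{ij}$ do \emph{not} themselves form a staircase partition cut out by sub-additivity; they form a two-indexed Shi tableau, and the paper's partition is $\lambda_i=\sum_{j\ge i}k_{i,j}$, the row sums. The hard content of your step~(3) is not verifying inequalities on the $k_{ij}$ but showing that this row-sum map is a bijection, i.e.\ that a partition determines its Shi tableau uniquely. The paper does this by writing down an explicit recursive formula for the inverse (their Theorem~4.2) and then proving, with some work, that the resulting tableau really satisfies the Shi conditions. Your sketch does not anticipate this; ``sub-additivity cuts out exactly the staircase partitions'' is not the right statement and would not yield the bijection.

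Second, your step~(5) has the bounded/positive correspondence backwards. A dominant region is bounded if and only if \emph{no} $H_{\alpha_i,m}$ with $\alpha_i$ simple is a separating wall (not ``every''), which under $\omega_n$ matches facets containing \emph{no} negative simple root, i.e.\ the positive part $\Delta^m_+(A_n)$. Your second clause (``contains no negative simple root'') is correct, but it contradicts your first clause; as written the restriction would send the positive part to the unbounded regions.
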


We remark that the map $\omega_n$ of Theorem \ref{propA} restricts to a 
bijection between facets of $\Delta^m_+(A_n)$ and bounded regions in ${\rm Cat}^m(A_n)$. 
Even further, $\omega_n$ satisfies a certain refinement of this;  
it gives a combinatorial proof of the following observation, in case $\Phi=A_n$.

\begin{proposition}
\label{ourproperty} 
Let $\Phi$ be a finite crystallographic root system and let  
$\{\alpha_1,\alpha_2,\dots,\alpha_n\}$  be its set of simple roots. 
For any $J \subseteq [n]$,
the number of facets of $\Delta^m(\Phi)$ containing 
exactly the negative simple roots $-\alpha_i$ with $i \in J$,  is 
equal to the number of dominant regions in ${\rm Cat}^m(\Phi)$ 
with simple  separating walls $H_{\alpha_i,m}$ with $i\in J$.  
\end{proposition}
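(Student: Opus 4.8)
The plan is to establish the identity uniformly, for every finite crystallographic $\Phi$, by a Möbius‑inversion argument: I compare the two families not set‑by‑set but through their ``$\supseteq J$'' sums, which turn out to admit a common closed form. For $J\subseteq[n]$ let $a(J)$ be the number of facets of $\Delta^m(\Phi)$ whose negative simple roots are exactly $\{-\alpha_i\mid i\in J\}$, and $b(J)$ the number of dominant regions of $\Cat^m(\Phi)$ whose simple separating walls are exactly $\{H_{\alpha_i,m}\mid i\in J\}$; the goal is $a(J)=b(J)$. Set $A(J)=\sum_{J'\supseteq J}a(J')$ and $B(J)=\sum_{J'\supseteq J}b(J')$. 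Möbius inversion over the Boolean lattice gives $a(J)=\sum_{J'\supseteq J}(-1)^{|J'\setminus J|}A(J')$, and likewise for $b$, so it suffices to prove $A(J)=B(J)$ for all $J$. I will show that each equals the $m$‑Catalan number $N(\Phi_{[n]\setminus J},m)$ of the standard parabolic (Levi) subsystem $\Phi_{[n]\setminus J}$ generated by $\{\alpha_i\mid i\notin J\}$ (with the usual multiplicative convention over irreducible components).

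For the cluster side, $A(J)$ counts the facets of $\Delta^m(\Phi)$ containing the whole face $\{-\alpha_i\mid i\in J\}$, that is, the facets of the link $\lk(\{-\alpha_i\mid i\in J\})$. Here I would invoke the structural theorem of Fomin and Reading that this link is isomorphic to $\Delta^m(\Phi_{[n]\setminus J})$; its number of facets is $N(\Phi_{[n]\setminus J},m)$, since $\Delta^m$ of a reducible system is a join and facet counts multiply. Hence $A(J)=N(\Phi_{[n]\setminus J},m)$.

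For the region side the crucial input is a type‑free lemma: for a simple root $\alpha_i$ and a dominant region $R$, the hyperplane $H_{\alpha_i,m}$ is a separating wall of $R$ if and only if $\langle v,\alpha_i\rangle>m$ for all $v\in R$. The forward direction is clear, as the origin sits at level $0<m$. For the converse, write $\gamma=\sum_k c_k\alpha_k$ for a positive root; on the dominant chamber $\langle v,\gamma\rangle\ge c_i\langle v,\alpha_i\rangle$, so if $\langle v,\alpha_i\rangle>m$ on $R$ then every $\gamma$ with $c_i\ge1$ has $\langle v,\gamma\rangle>m$ and no hyperplane $H_{\gamma,k}$ with $0\le k\le m$ meets $R$; the surviving hyperplanes $H_{\beta,k}$ have $c_i=0$ and are therefore constant along the direction that increases only the $\alpha_i$‑coordinate, so $R$ is a cylinder in that direction bounded below exactly by $H_{\alpha_i,m}$, which is thus a wall. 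Consequently $B(J)$ equals the number of regions of $\Cat^m(\Phi)$ contained in the open set $U_J=\{v:\langle v,\alpha_i\rangle\ge 0\ \text{for all }i,\ \langle v,\alpha_j\rangle>m\ \text{for all }j\in J\}$. The same coefficient estimate shows that any positive root $\gamma$ whose support meets $J$ satisfies $\langle v,\gamma\rangle>m$ on $U_J$, so the only hyperplanes cutting $U_J$ are the $H_{\beta,k}$ with $\beta\in\Phi_{[n]\setminus J}$, i.e. the (cylindrified) hyperplanes of $\Cat^m(\Phi_{[n]\setminus J})$. Letting $p$ be the orthogonal projection onto $V_J=\mathrm{span}\{\alpha_i\mid i\notin J\}$, these functionals factor through $p$, and $p$ maps $U_J$ onto the dominant chamber $C_J$ of $\Phi_{[n]\setminus J}$ with convex, hence connected, fibers. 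The sign‑vector argument then yields a bijection between the regions in $U_J$ and the dominant regions of $\Cat^m(\Phi_{[n]\setminus J})$, so $B(J)=N(\Phi_{[n]\setminus J},m)=A(J)$, which completes the proof.

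The main obstacle is the region‑side computation of $B(J)$: one must verify that exactly the parabolic hyperplanes survive on $U_J$ and that $p$ induces a genuine bijection of regions, not merely an equality of counts. Surjectivity $p(U_J)=C_J$ rests on the linear independence of the projections of $\{\alpha_j\mid j\in J\}$ to $V_J^{\perp}$, which lets the $J$‑coordinates be pushed past $m$ while the complementary ones are held fixed, and the convexity of each region supplies the connected fibers; the separating‑wall lemma is what translates the wall condition into the inequalities defining $U_J$. By contrast, the cluster‑complex side is immediate once the Fomin--Reading link theorem is cited, and the passage from ``$\supseteq J$'' to ``exactly $J$'' is formal. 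In the case $\Phi=A_n$ all coefficients $c_i$ equal $0$ or $1$, the lemma is automatic, and the two ``exactly $J$'' families are matched concretely and bijectively by $\omega_n$ together with Theorem~\ref{propA}; the argument above is the type‑free replacement establishing the statement for every finite crystallographic $\Phi$.
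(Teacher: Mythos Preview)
Your argument is correct and takes a genuinely different route from the paper's. The paper computes both sides of the ``exactly $J$'' identity directly: citing \cite[Theorem~3.7]{fr-gcccc-05}, the facets of $\Delta^m(\Phi)$ whose set of negative simple roots is exactly $\{-\alpha_i : i\in J\}$ are identified with the positive facets of $\Delta^m(\Phi_{[n]\setminus J})$, and citing the proof of \cite[Lemma~5.3]{atz-pc-06}, the dominant regions whose set of simple separating walls is exactly $\{H_{\alpha_i,m} : i\in J\}$ are identified with the bounded dominant regions of $\Cat^m(\Phi_{[n]\setminus J})$; both sides therefore equal the \emph{positive} Catalan number $N_+(\Phi_{[n]\setminus J},m)$, and the proof is two sentences long. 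You instead pass to the ``at least $J$'' counts $A(J),B(J)$, show that each equals the full $m$-Catalan number $N(\Phi_{[n]\setminus J},m)$, and then M\"obius invert. On the cluster side the two approaches are close cousins (link versus positive part of the link, both from Fomin--Reading), but on the region side your projection argument is more self-contained: you effectively rederive geometrically what the paper imports from \cite{atz-pc-06}, at the cost of having to check the separating-wall lemma and the surjectivity and connected-fiber claims for $p$. The paper's proof is shorter because it leans on existing lemmas; yours makes the underlying geometry of the region side explicit and yields the additional closed form $A(J)=B(J)=N(\Phi_{[n]\setminus J},m)$ along the way.
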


In order to construct the bijection $\omega_n$, 
we associate dominant regions of the $m$-Catalan arrangement ${\rm Cat}^{m}(A_n)$ to facets 
of the $m$-generalized cluster complex $\Delta^m(A_n)$ via certain integer partitions. 
More precisely, as mentioned above, we use as intermediaries integer partitions whose Young
diagram fits inside the diagram of the partition 
$(mn,m(n-1),\ldots,m)$ and we call them \emph{type-$A$} partitions of size $(n,m)$. 
We denote the set of such partitions by $\pP^m(A_n)$ (or $\pP^m_n$ for short). 
Partitions in $\pP^m_n$ 
can also be viewed as lattice paths from $(0,0)$ to
$(mn,n)$ using north and east steps which stay above the line 
$y=\frac{1}{m}x-1$, or equivalently $m$-Dyck, or $m$-ballot paths of size $n+1$.
Our main idea is to biject both facets of the cluster complex and 
dominant Catalan regions to type-$A$ partitions. 
A facet containing the negative simple root $-\alpha_i$ becomes a partition 
$(\lambda_1,\lambda_2,\dots,\lambda_n)$ whose
$i$-th part has maximum size, that is 
$\lambda_i=(n-i+1)m$. On the other hand, a dominant region with simple separating wall
$H_{\alpha_i,m}$ also bijects to a partition whose $i$-th 
part has maximum size. We remark that a 
maximum-sized part in a type-$A$ partition corresponds to a point where the lattice
path touches a line of the form $y=x/m+t$, where $t\in\mathbb{R}$, 
or where the $m$-Dyck path touches the line $y=\frac{1}{m}x$. 
Touch points (also called contacts) appear often in the literature. 
For instance, in \cite{bfp-11}, touch points are used to derive a recursion to count
the number of intervals in a generalization of the $m$-Tamari lattice and 
in \cite{hmz-10} sorting $1$-Dyck paths by touch points leads to a refinement of the shuffle conjecture. 
In this paper we prove combinatorially the following theorems, which eventually lead to the construction of $\omega_n$. 

\begin{theorem}
\label{all partitions}
Let $\{\alpha_1,\alpha_2,\dots,\alpha_n\}$ be the set of simple roots of type $A_n$. 
There exists a bijection $\varphi_n:\rR^m_n\to\pP^m_n$ with
the property that the hyperplane $H_{\alpha_i,m}$ is a separating wall 
of the region $R\in \rR^m_n$ for $i\in[n]$, 
if and only if the $i$-th part of the partition $\varphi_n(R)$ is equal to $(n-i+1)m$.
\end{theorem}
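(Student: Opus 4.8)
The plan is to construct $\varphi_n$ explicitly using the standard parametrization of dominant regions of $\mathrm{Cat}^m(A_n)$ by their ``address'' data. Recall that the positive roots of type $A_n$ are $\alpha_{ij} = \alpha_i + \alpha_{i+1} + \cdots + \alpha_j$ for $1 \le i \le j \le n$. A dominant region $R \in \rR^m_n$ is determined by the integers $k_{ij}(R) \in \{0,1,\dots,m\}$ such that $k_{ij}(R) < \langle v, \alpha_{ij}\rangle < k_{ij}(R)+1$ for all $v \in R$ (with the convention that $k_{ij}(R) = m$ means $\langle v, \alpha_{ij}\rangle > m$ on $R$). These numbers are not independent: they satisfy the ``subadditivity/superadditivity'' constraints forced by $\alpha_{ij} = \alpha_{ik} + \alpha_{(k+1)j}$, and it is a theorem (Athanasiadis, Shi) that the admissible tuples $(k_{ij})$ are exactly those coming from regions. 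First I would record this characterization and, in particular, note that $H_{\alpha_i,m}$ is a separating wall of $R$ precisely when $k_{ii}(R) = m$, i.e.\ when $\langle v, \alpha_i \rangle > m$ on $R$; more generally a wall $H_{\alpha_{ij},k}$ is separating iff it bounds $R$ and $k \ge 1$.

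Next I would define the candidate map. Given $R$, I want to produce a lattice path from $(0,0)$ to $(mn,n)$ staying above $y = x/m - 1$, equivalently a partition $\lambda(R) = (\lambda_1 \ge \lambda_2 \ge \cdots \ge \lambda_n)$ with $\lambda_i \le (n-i+1)m$. The natural guess, reading off the ``height'' of the region coordinate by coordinate, is
\[
\lambda_i(R) \;=\; \big(n-i+1\big)m \;-\; k_{i,n}(R),
\]
so that the $i$-th part is maximal exactly when $k_{i,n}(R) = 0$. But the stated property demands that $\lambda_i = (n-i+1)m$ iff $H_{\alpha_i,m}$ is a separating wall, i.e.\ iff $k_{ii}(R) = m$; so the correct formula is instead built from the ``co-heights'' $m - k$: set
\[
\lambda_i(R) \;=\; \sum_{j=i}^{n} \big(m - k_{ij}(R)\big),
\]
which lies between $0$ and $(n-i+1)m$, and which equals its maximum $(n-i+1)m$ exactly when $k_{ij}(R) = 0$ for all $j \ge i$ — and the admissibility constraints (superadditivity in the first index) force $k_{ij}(R) = 0$ for all $j \ge i$ as soon as $k_{ii}(R) = m$ would fail \dots{} here I will need to unwind the precise inequalities; the honest statement is that $k_{ii}(R)$ together with the region data pins down whether part $i$ is maximal, and I expect the clean equivalence ``$\lambda_i = (n-i+1)m \iff H_{\alpha_i,m}$ separating'' to fall out of the characterization of admissible tuples once the formula is normalized correctly. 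I would then verify $\lambda_1 \ge \lambda_2 \ge \cdots$ (monotonicity of parts) from the same constraints, establishing that $\lambda(R) \in \pP^m_n$.

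The third step is to prove bijectivity. One approach is to exhibit the inverse directly: from a partition $\lambda \in \pP^m_n$, read off the lattice path, mark its contact points with the diagonals $y = x/m + t$, and reconstruct the tuple $(k_{ij})$, then invoke Athanasiadis--Shi to see this tuple is admissible, hence comes from a unique region. Alternatively, since $|\rR^m_n| = |\pP^m_n| = \frac{1}{mn+1}\binom{(m+1)n}{n}$ is already known (both are counted by the Fuss--Catalan number for $A_{n-1}$, resp.\ the $m$-ballot paths of size $n+1$), it suffices to prove $\varphi_n$ is injective \emph{or} surjective; injectivity is the easier of the two, since distinct regions differ in some $k_{ij}$ and I would show this difference propagates to some part $\lambda_\ell$. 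The main obstacle, as I see it, is precisely the bookkeeping in this middle step: getting the indexing of the formula for $\lambda_i(R)$ exactly right so that the ``separating wall $\iff$ maximal part'' equivalence holds on the nose, and then checking that the admissibility inequalities for dominant regions translate cleanly into the inequalities $0 \le \lambda_n \le \lambda_{n-1} \le \cdots$ and $\lambda_i \le (n-i+1)m$ that define $\pP^m_n$, with the boundary line $y = x/m - 1$ (rather than $y = x/m$) accounted for by the shift to the partition $(mn, m(n-1), \dots, m)$ of a path of size $n+1$. I would handle this by working out the $n=2$ and $n=3$ cases completely as a sanity check before writing the general argument.
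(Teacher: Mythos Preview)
Your high-level plan---encode a region by its Shi coordinates $k_{ij}$ and build the partition from row data---is exactly right, and matches the paper. But your formula is wrong, and the error is not cosmetic. You set
\[
\lambda_i(R)=\sum_{j=i}^n\bigl(m-k_{ij}(R)\bigr),
\]
and claim this lands in $\pP^m_n$. It need not even be weakly decreasing. Take $n=2$, $m=1$, and the region with $k_{1,1}=k_{1,2}=1$, $k_{2,2}=0$ (a valid Shi tableau). Your formula gives $\lambda_1=0$, $\lambda_2=1$, which is not a partition. More conceptually, your $\lambda_i$ is maximal exactly when $k_{ij}=0$ for all $j\ge i$, which by the monotonicity $k_{i,i}\le k_{i,i+1}\le\cdots\le k_{i,n}$ is equivalent to $k_{i,n}=0$; this has nothing to do with $k_{i,i}=m$, the condition for $H_{\alpha_i,m}$ to separate. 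Your own hedge (``here I will need to unwind the precise inequalities'') is the tell: the equivalence you are hoping for does not hold.

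The correct formula is the uncomplemented row sum
\[
\lambda_i(R)=\sum_{j=i}^n k_{ij}(R).
\]
Then $\lambda_i=(n-i+1)m$ iff every $k_{ij}=m$ for $j\ge i$; since the Shi conditions force $k_{i,j}\ge k_{i,j-1}$ along each row, this is equivalent to $k_{i,i}=m$, which is exactly the separating-wall condition for $H_{\alpha_i,m}$. Column monotonicity $k_{i,j}\ge k_{i+1,j}$ then gives $\lambda_i\ge\lambda_{i+1}$, so the image really lies in $\pP^m_n$. For bijectivity, your ``injectivity plus equal cardinalities'' route works (and the paper notes Richards did precisely this), but the paper goes further and writes down an explicit recursive inverse $\bar\varphi_n$ sending $\lambda$ back to a Shi tableau, then proves the image satisfies the Shi conditions. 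That is the substantive new content; if you want to match the paper's contribution rather than just its statement, you would need to supply that inverse as well.
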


\begin{theorem}
\label{snakedisA}
Let $\{\alpha_1,\alpha_2,\dots,\alpha_n\}$ be the set of simple roots of type $A_n$. 
There exists a bijection $\psi_n:\dD^m_n\to \pP^m_n$ with
the property that the negative simple root $-\alpha_i$ is contained 
in the facet $D\in\dD^m_n$ for $i\in[n]$, 
if and only if the $i$-th part of the partition $\psi_n(D)$ is equal to $(n-i+1)$.
\end{theorem}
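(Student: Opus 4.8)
The plan is to transport the statement to the standard polygon model of the generalized cluster complex and then to the classical encoding of polygon dissections by lattice paths. Recall (see \cite{fr-gcccc-05,tz-fgcc-08}) that the facets of $\Delta^m(A_n)$ are in canonical bijection with the $(m+2)$-angulations of a convex polygon $\Pi$ with $m(n+1)+2$ vertices, i.e.\ with the subdivisions of $\Pi$ into $(m+2)$-gons by $n$ pairwise noncrossing diagonals; in this model the ground set $\Phi^m_{\ge -1}$ is identified with the admissible diagonals, and the negative simple roots $-\alpha_1,\dots,-\alpha_n$ are identified with the diagonals $d_1,\dots,d_n$ of a fixed \emph{snake} $(m+2)$-angulation $\Sigma_0$. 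Thus ``$-\alpha_i\in D$'' means ``$d_i$ is a diagonal of $D$'', and Theorem~\ref{snakedisA} becomes a purely combinatorial assertion about $(m+2)$-angulations of $\Pi$ (the largest value a part in position $i$ may take being $m(n-i+1)$).

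Next I would fix a boundary edge $e$ of $\Pi$ and label its vertices $0,1,\dots,m(n+1)+1$ cyclically so that $e=\{0,\,m(n+1)+1\}$ and so that in $\Sigma_0$ the diagonals $d_1,\dots,d_n$ occur in this order as one moves away from $e$, with $d_1=\{m,\,m(n+1)+1\}$ a side of the cell $c_0$ of $\Sigma_0$ containing $e$. Given an $(m+2)$-angulation $D$, I define $\psi_n(D)$ recursively: let $c$ be the cell of $D$ containing $e$; deleting $c$ splits $\Pi$ into smaller polygons, each inheriting the portion of $D$ lying inside it; apply $\psi$ to these pieces and reassemble the resulting partitions in the order dictated by the cyclic order on $\partial\Pi$. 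Equivalently, $\psi_n$ is the composite of the well-known bijections from $(m+2)$-angulations to $(m+1)$-ary trees with $n+1$ internal nodes, then to $m$-Dyck paths of size $n+1$, then to partitions in $\pP^m_n$; the recursive description merely makes this composite explicit, and since every step is visibly reversible it also supplies $\psi_n^{-1}$. A direct count shows the output is weakly decreasing and bounded partwise by $(mn,m(n-1),\dots,m)$, so $\psi_n$ maps $\dD^m_n$ into $\pP^m_n$; in particular $\psi_n(\Sigma_0)$ is the full $m$-staircase $(mn,m(n-1),\dots,m)$.

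It then remains to prove the marked property, for which I would induct on $n$, distinguishing whether $d_1$ is a diagonal of $D$. If $d_1\in D$, then $d_1$ cuts off the $(m+2)$-gon on the vertices $\{0,1,\dots,m,\,m(n+1)+1\}$, which is forced to be the cell $c$; deleting it leaves a single polygon on $mn+2$ vertices carrying a facet $D'$ of $\Delta^m(A_{n-1})$ whose snake diagonals are $d_2,\dots,d_n$, and the recursion gives $\psi_n(D)=(mn,\psi_{n-1}(D'))$. Here the first part attains its maximum, matching $d_1\in D$, and the induction hypothesis, applied after reindexing, transports the property to positions $2,\dots,n$. If $d_1\notin D$, then $c\neq c_0$, the polygons obtained by deleting $c$ are all strictly smaller, and one checks that the reassembled first part is strictly less than $mn$, while the positions $i\ge 2$ whose parts are saturated are exactly those coming from snake diagonals surviving in the pieces. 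The two cases together yield both implications at once; intersecting with the facets that contain no negative simple root recovers the analogue for $\Delta^m_+(A_n)$, in parallel with Theorem~\ref{all partitions}.

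The main obstacle is the bookkeeping in the case $d_1\notin D$: one must verify that the $(m+1)$-fold recursion respects the partwise bound and, more delicately, that the correspondence between a snake diagonal being present and the corresponding part being saturated survives the reindexing of simple roots forced by passing from $\Pi$ to its sub-polygons. I expect the cleanest remedy is not to treat bijectivity and the marked property separately, but to prove by a single induction the stronger statement that, for every $J\subseteq[n]$, the map $\psi_n$ restricts to a bijection between the facets $D$ with $\{i:d_i\in D\}=J$ and the partitions $\lambda\in\pP^m_n$ with $\{i:\lambda_i=m(n-i+1)\}=J$; peeling the root cell decreases $n$ by one and, in the two cases above, either deletes $1$ from $J$ together with the largest part of $\lambda$, or relabels the surviving indices in lockstep with a corner move on $\lambda$, so the induction closes.
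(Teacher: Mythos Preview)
Your approach is genuinely different from the paper's, and the difficulty you flag in the last two paragraphs is real and not resolved by what you wrote.

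The paper does \emph{not} use the standard recursive bijection through $(m+1)$-ary trees and $m$-Dyck paths. Instead, its key idea is a specific relabeling of the polygon vertices, the \emph{alternating type-$A$ labeling}: fix vertex $0$, place the labels $k$ with $\lfloor k/m\rfloor$ even on one side (increasing counterclockwise) and those with $\lfloor k/m\rfloor$ odd on the other side (increasing clockwise). Under this labeling the snake diagonal $-\alpha_i$ becomes exactly $\{m(n-i+1),\,m(n-i+2)\}$, and $\psi_n(D)$ is defined non-recursively as the sorted multiset of minimum-endpoint labels of the diagonals of $D$. The paper first explains why the naive initial-points map with the \emph{standard} labeling fails the marked property, and then shows that the alternating relabeling repairs it. The inverse is built by a one-step induction: given $\lambda$, place the single diagonal with initial point $\lambda_1$ (its other endpoint is forced), then recurse on the one remaining $(mn+2)$-gon with $(\lambda_2,\dots,\lambda_n)$. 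Because only one sub-polygon appears at each step, the bookkeeping you worry about never arises.

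Your recursion, by contrast, peels the root cell and may produce up to $m+1$ sub-polygons at once. In the case $d_1\notin D$, the snake $d_2,\dots,d_n$ does \emph{not} in general restrict to snakes in the pieces: some $d_i$ may cross a side of the root cell (hence lie in no piece), and even those that lie entirely in a piece need not form a zigzag starting at that piece's new root edge. So the induction hypothesis, as stated for snakes, is not directly applicable to the pieces. Your proposed strengthening to a $J$-indexed statement does not by itself resolve this, since the same question---which $d_i$ survive into which piece, and at which index---must still be answered before the hypothesis can be invoked. This can likely be pushed through with enough care, but you have not done it, and the paper's alternating-labeling device is precisely the trick that makes the argument short.
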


For our bijections we use a realization of the facets of the cluster complex in terms of
polygon dissections given by S.~Fomin and N.~Reading
\cite{fr-gcccc-05}. Also, we realize the regions in terms of certain
tableaux which we call \emph{type-$A$ Shi tableaux}.

This paper is structured as follows. In Section \ref{sec:prelim} we
provide the necessary background, fix notation and prove Proposition
\ref{ourproperty}, which constitutes the motivation for our work. 
Since the realization of the dominant regions in ${\rm Cat}^m_n$ as Shi tableaux will be crucial for our proofs, 
in Section \ref{shi} we describe these tableaux in detail.
In Section \ref{regions to facets} we prove Theorem 
\ref{all partitions}. Theorem \ref{snakedisA} is proved in Section 
\ref{facets to partitions}. We complete this paper with Section \ref{sec:concl}, where we prove Theorem \ref{propA} 
by composing the bijections of Theorems \ref{all partitions} and \ref{snakedisA}.
We also include an application of Theorem \ref{snakedisA}, 
where we rediscover the positive Catalan numbers of type $A_n$ bijectively. 
We therefore answer \cite[Remark 2]{atz-pc-06} in the case where $k$ is the rank of
the root system, and $\Phi=A_n$. 

\section{Preliminaries}
\label{sec:prelim} 

\subsection{Root system of type $A_n$} 
\label{sec:roots}
Let  $\varepsilon_1,\varepsilon_2,\ldots,\varepsilon_{n+1}$ be the standard basis in $\mathbb{R}^{n+1}$.   
In type $A_n$, a standard choice of positive and simple roots are respectively, 
the sets $\{\alpha_{ij} \mid  1 \leq i\leq j \leq n\}$ and $\{\alpha_i \mid 1\leq i \leq n\}$, 
where $\alpha_{ij}:= \varepsilon_i-\varepsilon_{j+1}$ and $\alpha_i:=\varepsilon_i-\varepsilon_{i+1}$, 
for every $1\leq i\leq j\leq n$.  
The positive roots can be written in terms of
simple roots as
\[\alpha_{ij}=\alpha_i+\alpha_{i+1}+\cdots+\alpha_j,\ \mbox{for every}\  1\leq i\leq j\leq n.\] 
For instance, the positive roots of $A_2$ are the vectors: 
$\alpha_1=(1,-1,0), \alpha_2=(0,1,-1)$ and $\alpha_{12}=(1,0,-1)$.   

\subsection{Cluster complexes and polygon dissections}
\label{subsec:dis}
The cluster complex $\Delta^m(A_n)$ is a pure simplicial complex of dimension $n-1$ on the
ground set of \emph{colored almost positive roots}
which consists of $m$ ``colored'' copies of each positive root and one
copy of each negative simple root. This simplicial complex 
can be described in terms of \emph{polygon dissections}. 
We refer the reader to \cite{fr-gcccc-05} for background on the generalized cluster complex.

Let $P$ be a polygon with $m(n+1)+2$ vertices labeled from $0$ up to
$m(n+1)+1$ in counterclockwise order. An \emph{$m$-diagonal} in $P$
is a diagonal dissecting $P$ into a pair of polygons, where for each
polygon the number of vertices is congruent to $2$ modulo $m$. 
A collection of $n$ such $m$-diagonals is a {\em $(m+2)$-angulations} of $P$. 
We call such a $(m+2)$-angulation a \emph{type-$A$ (polygon) dissection} of size $(n,m)$, or 
\emph{$m$-dissection} for short.

The faces of $\Delta^m(A_n)$ of dimension $k$ 
are in bijection with dissections of $P$
having $k+1$ pairwise non crossing $m$-diagonals.  
The facets of $\Delta^m(A_n)$
correspond to $m$-dissections of $P$. 
For $1 \leq i \leq \frac{n+1}{2}$, the negative simple root $-\alpha_{2i-1}$ 
is identified with the diagonal of $P$ connecting the
vertex $(i-1)m$ to the vertex $(n+1-i)m+1$. 
For $1\leq i\leq\frac{n}{2}$, we identify the negative simple root $-\alpha_{2i}$ with
the diagonal of $P$ connecting the vertex $im$ to the vertex $(n+1-i)m+1$. 
We notice that the negative simple roots form a ``snake'' in $P$ (see Figure \ref{fig:snake1}). 

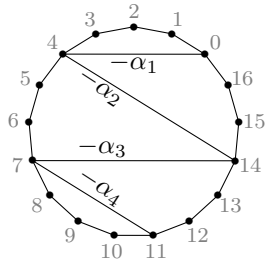
\begin{figure}[h]
\begin{tikzpicture}[scale=0.7]
\path (280.59: 2 cm) coordinate (A0); \path (301.76: 2 cm) coordinate  (A1); \path (322.93 : 2 cm) coordinate  (A2);\path (344.1 : 2 cm) coordinate  (A3);
\path (365.27 : 2 cm) coordinate  (A4); \path (26.44 : 2 cm) coordinate  (A5); \path (47.61 : 2 cm) coordinate  (A6); \path (68.78: 2 cm) coordinate  (A7);
\path (89.95 : 2 cm) coordinate  (A8); \path (111.12 : 2 cm) coordinate  (A9); \path (132.29 : 2 cm) coordinate  (A10);
\path (153.46: 2 cm) coordinate  (A11); \path (174.63: 2 cm) coordinate  (A12); \path (195.8 : 2 cm) coordinate  (A13);
\path (216.97 : 2 cm) coordinate  (A14); \path (238.14: 2 cm) coordinate  (A15);   \path (259.31  : 2 cm) coordinate  (A16);
\draw (A0)--(A1)--(A2)--(A3)--(A4)--(A5)--(A6)--(A7)--(A8)--(A9)--(A10)--(A11)--(A12)--(A13)--(A14)--(A15)--(A16)--(A0);
\node  at (A0){ \tiny $\bullet$ };  \node  at (A1){  \tiny $\bullet$};  \node  at (A2){  \tiny$\bullet$ };  \node  at (A3){ \tiny $\bullet$};
\node  at (A4){ \tiny $\bullet$}; \node  at (A5){  \tiny $\bullet$ };  \node  at (A6){  \tiny $\bullet$};  \node  at (A7){  \tiny $\bullet$ }; 
\node  at (A8){  \tiny $\bullet$}; \node  at (A9){ \tiny $\bullet$}; \node  at (A10){  \tiny $\bullet$ };  \node  at (A11){  \tiny $\bullet$};
\node  at (A12){ \tiny $\bullet$ };  \node  at (A13){ \tiny $\bullet$}; \node  at (A14){  \tiny $\bullet$}; \node  at (A15){ \tiny$\bullet$ };
\node  at (A16){ \tiny$ \bullet $ };  
\node at  (280.59: 2.3 cm) { \footnotesize \tc{black!50}{11} }; 
\node at (301.76: 2.3 cm) {\footnotesize \tc{black!50}{12}}; 
\node at  (322.93 : 2.3 cm) {\footnotesize\tc{black!50}{13}};  
\node at  (344.1 : 2.3 cm) {\footnotesize\tc{black!50}{14}};
\node at  (365.27 : 2.3 cm) {\footnotesize\tc{black!50}{15}}; 
\node at  (26.44 : 2.3 cm) {\footnotesize\tc{black!50}{16}}; 
\node at  (47.61 : 2.3 cm) {\footnotesize\tc{black!50}{0}}; 
\node at (68.78: 2.3 cm) {\footnotesize\tc{black!50}{1}};
\node at  (89.95 : 2.3  cm) {\footnotesize\tc{black!50}{2}}; 
\node at (111.12 : 2.3 cm) {\footnotesize\tc{black!50}{3}}; 
\node at  (132.29 : 2.3 cm){\footnotesize\tc{black!50}{4}}; 
\node at (153.46: 2.3 cm){\footnotesize\tc{black!50}{5}}; 
\node at (174.63: 2.3 cm){\footnotesize\tc{black!50}{6}}; 
\node at  (195.8 : 2.3 cm){\footnotesize\tc{black!50}{7}}; 
\node at  (216.97 : 2.3 cm) {\footnotesize\tc{black!50}{8}}; 
\node at  (238.14: 2.3 cm){\footnotesize\tc{black!50}{9}}; 
\node at  (259.31  : 2.3 cm) {\footnotesize\tc{black!50}{10}};   
\draw[black] (A0)--(A13); \draw[black] (A3)--(A13);\draw[black] (A6)--(A10); \draw[black] (A3)--(A10);
\node at (-0.7,-1) { \rotatebox{-33}{ \tc{black}{ $ -\alpha_4$}}}; 
\node at (-0.6,-0.3) { \tc{black}{$-\alpha_3$}};  \node at (-0.7,0.9) { \rotatebox{-33}{ \tc{black}{ $ -\alpha_2$}}}; 
\node at (0,1.3) { \tc{black}{$-\alpha_1$}};
\end{tikzpicture} 
\caption{The snake formed by the negative simple roots for $\Delta^3(A_4)$.}
\label{fig:snake1}
\end{figure} 

For each pair $(i,j)$ with $1\leq i\leq j\leq n+1$ there are exactly $n$ many  $m$-diagonals
that intersect (in the interior) the diagonals
$-\alpha_i,-\alpha_{i+1},\ldots,-\alpha_j$ and no other diagonals in
the snake. 
The diagonal corresponding to the colored positive root $\alpha^k_{ij}$ 
is uniquely determined as the $k$-th in clockwise order
of the exactly $n$ $m$-diagonals which intersect
$-\alpha_i,-\alpha_{i+1},\ldots,-\alpha_j$. 
Under this identification, every $m$-diagonal in $P$ corresponds to an almost
positive root in $\Delta^m(A_n)$ \cite[Section 5.1]{fr-gcccc-05}. 
In this paper we denote by
$\dD^m(A_n)$ (or $\dD^m_n$ for short) the set of $m$-dissections of an $(m(n+1)+2)$-gon.

\begin{definition}
We say that $D\in \dD^m_n$ \emph{contains} the negative simple root
$-\alpha_i$, for some $i\in[n]$, if the diagonal
corresponding to $-\alpha_i$ is contained in $D$.
\end{definition}

\subsection{Integer partitions}
\label{partitions}
An \emph{integer partition} is a nonincreasing sequence
$\lambda=(\lambda_1,\lambda_2,\ldots,\lambda_n)$ of nonnegative
integers, called  parts. We identify a partition
$\lambda=(\lambda_1,\lambda_2,\ldots, \lambda_n)$ with its \emph{Young
  diagram}, the left-justified array of boxes with $\lambda_i$ boxes
in row $i$. The box in row $i$, column $j$, has
coordinates $(i,j)$. In what follows we describe the partitions we are interested in. 
Let $n$ and $m$ be positive integers. 
We denote by $\pP^m(A_n)$ (or $\pP^m_n$ for short) the set of partitions whose Young
  diagram fits into an $m$-staircase shape of size $n$ (see Figure \ref{fig:part}): 

  \[\pP^m_n=\{(\lambda_1,\lambda_2,\ldots,\lambda_n) \mid 0\leq\lambda_i\leq
  m(n-i+1)\}.\] 

\begin{figure}[h]
\begin{center}
  \begin{tikzpicture}[scale=0.4]
  \foreach \i in {0,...,11} 
  {\draw (\i,3) rectangle (1+\i,4); } 
  \foreach \i in {0,...,8} 
  {\draw (\i,2) rectangle (1+\i,3); } 
  \foreach \i in {0,...,5} 
  {\draw (\i,1) rectangle (1+\i,2); } 
  \foreach \i in {0,...,2} 
  {\draw (\i,0) rectangle (1+\i,1); }
\end{tikzpicture}
\end{center}
\caption{A $3$-staircase shape of size $4$.}
\label{fig:part}
\end{figure}
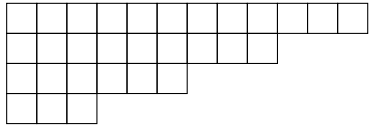

  The number of such partitions is
  $\frac{1}{m(n+1)+1}{(m+1)(n+1)\choose n+1}$.  Throughout this paper
  we will refer to partitions in $\pP^m_n$ as \emph{$(n,m)$-type $A$} partitions 
  (or just \emph{type-$A$} partitions, when there is no ambiguity).

\subsection{Dominant regions in the $m$-Catalan arrangement and Shi tableaux}
\label{subsec:drst}
Shi \cite{sh-nost-97} and others arrange the positive roots in diagrams
which encode the poset structure of the positive roots. We use this
idea to store coordinates describing the location of dominant regions
of the Catalan arrangement. We call the diagrams {\em Shi
tableaux}. 

Let $R$ be a dominant region in ${\rm Cat}^m(\Phi)$. 
By definition of the $m$-Catalan arrangement,  
for each $ \alpha \in \Phi_{>0}$  there exists an integer $0 \leq k_\alpha \leq m$ 
such that for all $x \in R$ we have
 $k_{\alpha} \leq \langle\alpha,x\rangle \leq k'_\alpha $, where 
 $$k'_\alpha =  \begin{cases}  k_\alpha +1 \;\;\; \mbox{ if }  0 \leq   k_\alpha\leq m-1,  \\
                      +\infty \;\;\;\;\;\; \mbox{  otherwise.} \end{cases} $$

\begin{definition}
\label{def:st} 
We define the \emph{Shi tableau for a dominant region} $R$ in 
${\rm Cat}^m(\Phi)$ to be the set of positive integers 
$\{k_\alpha \mid \alpha\in\Phi_{>0}\}$.  
\end{definition}  

Each Shi tableau is attached to a dominant region. 
More precisely, for each $ \alpha \in \Phi_{>0}$, the coordinate
$k_{\alpha}$ is the number of integer translations of the hyperplane
$H_{\alpha}$ which separate the region from the origin.

\subsection{Reducible crystallographic root systems and the proof of Proposition \ref{ourproperty}} 
\label{reducible} 
The definition of the cluster complex $\Delta^m(\Phi)$ as well as that
of the $m$-Catalan arrangement can be generalized to reducible
crystallographic root systems as follows. If $\Phi=\Phi_1 \times \cdots
\times \Phi_\ell$ is the factorization of $\Phi$ into irreducible root
systems, then $\Delta^m(\Phi)$ is defined as the simplicial join of
$\Delta^m(\Phi_i)$ with $ 1 \leq i \leq \ell$ 
\cite[Section 3]{fr-gcccc-05}, thus $N(\Phi,m)=\prod_{i=1}^\ell N(\Phi_i,m)$. 
Moreover, the set of facets of $\Delta^m(\Phi)$ containing no negative simple roots is equal to the
product $\prod_{i=1}^\ell N_+(\Phi_i,m)$ 
\cite[Corollary 12.1]{fr-gcccc-05}. 
On the other hand, the $m$-Catalan arrangement ${\rm Cat}^m(\Phi)$ 
generalizes straightforwardly in the case where $\Phi$ is reducible. 
Clearly, the number of regions and bounded regions is equal to 
$\prod_{i=1}^\ell N(\Phi_i,m)$ and
$\prod_{i=1}^\ell N_+(\Phi_i,m)$ respectively.
In what follows, we write $\Phi_I$ instead of $\Phi,$ where $I$ is an
index set in bijection with the set  $\Pi$ of simple roots. 
For $J\subseteq I$ we will denote by $\Phi_J$ the parabolic root system with
simple roots $\{\alpha_i \mid  i \in J\}$. Note that $\Phi_J $ may be
reducible, even in the case where $\Phi_I$ is irreducible.

\begin{proof}[Proof of Proposition \ref{ourproperty}]
From \cite[Theorem 3.7]{fr-gcccc-05} we deduce that the number of
facets of $\Delta^m(\Phi_I)$ having exactly the negative simple roots
$ -\alpha_i, i \in J$ is equal to the number of facets of $\Delta^m(
\Phi_{I \setminus J })$ having no negative simple roots.  From the
proof of \cite[Lemma 5.3]{atz-pc-06} we deduce that the number of
dominant regions in ${\rm Cat}^m(\Phi_I)$ with simple separating walls
$ H_{\alpha_i }, i \in J $ is equal to the number of bounded dominant
regions in $ {\rm Cat}^m(\Phi_{I \setminus J}). $ The result then
follows, since the number of facets of $\Delta^m( \Phi_{I \setminus J
})$ having no negative simple roots is equal to the number of bounded
dominant regions in $ {\rm Cat}^m(\Phi_{I \setminus J})$. 
\end{proof}

\noindent\emph{Notation}. For $n\in\mathbb{N}$ we denote by $[n]$ and $[n]_{\geq 0}$ the 
sets $\{1,2,\dots,n\}$ and $\{0,1,2,\dots,n\}$, respectively. 

\section{Shi tableaux}
\label{shi}

In this section we describe a way to represent the Shi tableaux for type-$A$ dominant regions.  
Let $R$ be a dominant region in $\rR^m_n$ and let $T$ be a staircase Young diagram of size $(n,n-1,\ldots,1)$. 
We arrange the coordinates $k_{\alpha_{ij}}$ corresponding to  $R$ in the diagram $T$ so that for each 
$1\leq i \leq j \leq n$, the integer $k_{\alpha_{i,j}}$ is placed in the box $(i,n-j+1)$. 
To simplify the notation, we write  $k_{i,j}$ instead of $k_{\alpha_{ij}}$  (see Figure \ref{fig:rootposetA}). 

\begin{figure}[h]
\begin{center}
\begin{tikzpicture}[scale=0.9]
\draw (3,1) rectangle (7,2);
\node at (3.5,1.5) {$k_{1,4}$};\node at (4.5,1.5){$k_{1,3}$};\node at (5.5,1.5){$k_{1,2}$};\node at (6.5,1.5){$k_{1,1}$}; 
\draw (3,0) rectangle (6,2);
\node at (3.5,0.5) {$k_{2,4}$};\node at (4.5,0.5){$k_{23}$};\node at (5.5,0.5){$k_{2,2}$};
\draw (3,-1) rectangle (5,2);
\node at (3.5,-0.5) {$k_{3,4}$};\node at (4.5,-0.5) {$k_{3,3}$};
\draw (3,-2) rectangle (4,2);
\node at (3.5,-1.5) {$k_{4,4}$};
\end{tikzpicture}
\caption{The distribution of the coordinates in a
4-staircase diagram for type $A_4$.}
\label{fig:rootposetA}
\end{center}
\end{figure}
Throughout this paper, we identify each dominant regions of an $m$-Catalan arrangement $\rR^m_n$ with its Shi tableau. 

\

The entries of a Shi tableau satisfy certain conditions
\cite{ath-rgcn-05}. In order to describe them explicitly, we need to fix some notation.
Let $\mathtt{x}$ be a box in a Young diagram $T$. A {\it hook $\mathsf h$ of length
  $\ell$ on a box $ \mathtt  x$} of $T$ is an array of $\ell$ contiguous
boxes lying to the right (and same row) or below (and same column) of
$ \mathtt  x$ including the box $\mathtt  x$ itself.  The box $\mathtt x$ is called 
\emph{the corner} of the hook.  
For a Young diagram whose boxes are filled with
numbers, we denote by ${\mathsf e}(\mathsf h)$ the sum of the numbers at the
endpoints of the hook $\mathsf h$.
The following characterization for the entries of a Shi tableau can be deduced 
from results in \cite{ath-rgcn-05,sh-acawg-97}. 
A more detailed proof can be found in \cite[Lemma 2.5]{fvt-fsw-11}.

\begin{proposition} 
Let $T=\{k_{i,j} \mid 1 \leq i \leq j \leq n \}$ be a set of integers in $[m]_0$ 
arranged in a staircase diagram of size $(n,n-1,\dots,1)$ so that 
 $k_{i,j}$ is placed in the box $(i,n-j+1)$. 
Then, $T$ is a type-$A$ Shi tableau of some region $R\in \rR^m_n$ if and only if 
for every $1\leq i < j \leq n$ and any hook ${\mathsf h}_{k_{i,j}}$ on $k_{i,j}$ of length $j-i+2$  we have        
\begin{equation}  
\label{acond}
    k_{i,j}=
    \begin{cases}{\mathsf e}({\mathsf h}_{k_{i,j}})+\delta, \hspace{1  cm} \mbox{if}\ {\mathsf e}( {\mathsf h}_{k_{i,j}}) < m \\
    m \hspace{2.5  cm} \mbox{otherwise,}  
    \end{cases} 
\end{equation} 
where $\delta\in\{0,1\}$. 
Equivalently, $T$ is a type-$A$ Shi tableau of some region $R\in \rR^m_n$ if and only if 
for every $1\leq i < j \leq n$ and $i\leq \ell < j$ we have 
\begin{equation}
\label{Ashi}
 k_{i,j} = \begin{cases} 
	   k_{i\ell} + k_{\ell+1,j }+\delta,  \hspace{1  cm} \mbox{if}\ k_{i\ell} + k_{\ell+1,j } < m   \\                    
           m \hspace{3.2  cm} \mbox{otherwise,}  
	    \end{cases}
            \end{equation} 
where $\delta\in\{0,1\}$.  
\end{proposition}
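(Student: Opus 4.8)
The statement characterizes type-$A$ Shi tableaux among all integer fillings of the staircase by the local conditions \eqref{acond} / \eqref{Ashi}. The plan is to prove this by reducing the geometric problem to an entirely combinatorial one, exploiting the known description of dominant regions of the $m$-Catalan (equivalently $m$-Shi) arrangement in terms of their coordinate vectors. Recall that for $\Phi = A_n$ one writes $\alpha_{ij} = \alpha_i + \cdots + \alpha_j$, so for any point $x$ in a region we have $\langle \alpha_{ij}, x\rangle = \langle \alpha_{i\ell}, x\rangle + \langle \alpha_{\ell+1,j}, x\rangle$ for every $i \le \ell < j$. The integer $k_{i,j}$ is, by Definition \ref{def:st} and the remark following it, precisely $\lfloor \langle \alpha_{ij}, x\rangle\rfloor$ for $x \in R$ (more precisely the unique integer with $k_{i,j} \le \langle\alpha_{ij},x\rangle \le k_{i,j}+1$, with the value being exactly $k_{i,j}$ on a full-dimensional set). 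So the additivity of the pairing immediately forces an additivity-up-to-carry relation on the $k$'s, which is exactly the ``only if'' direction of \eqref{Ashi}.

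First I would do the easy direction carefully: given a dominant region $R$ with coordinates $k_{i,j}$, fix $i < j$ and $i \le \ell < j$, pick a generic $x \in R$, and use $\langle\alpha_{ij},x\rangle = \langle\alpha_{i\ell},x\rangle + \langle\alpha_{\ell+1,j},x\rangle$ together with the clamping to $[0,m]$ (all pairings with positive roots lie in $[0,m]$ on the dominant chamber of $\mathrm{Cat}^m$). If $k_{i\ell} + k_{\ell+1,j} \ge m$ one checks $\langle\alpha_{ij},x\rangle \ge m$ and hence $k_{i,j} = m$; if $k_{i\ell}+k_{\ell+1,j} < m$ one gets $k_{i\ell}+k_{\ell+1,j} \le \langle\alpha_{ij},x\rangle < k_{i\ell}+k_{\ell+1,j}+2$, so $k_{i,j} \in \{k_{i\ell}+k_{\ell+1,j}, k_{i\ell}+k_{\ell+1,j}+1\}$, which is \eqref{Ashi} with the stated $\delta\in\{0,1\}$. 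Translating \eqref{Ashi} into the hook language of \eqref{acond} is then a purely notational matter: the hook ${\mathsf h}_{k_{i,j}}$ of length $j-i+2$ with corner at box $(i,n-j+1)$ has endpoints corresponding to the boxes of $k_{i\ell}$ and $k_{\ell+1,j}$ for the appropriate $\ell$ (the vertical leg of the hook chooses $\ell$), so ${\mathsf e}({\mathsf h}_{k_{i,j}}) = k_{i\ell} + k_{\ell+1,j}$; I would include a short lemma making this bookkeeping precise and would note it is where the length $j-i+2$ comes from.

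The substantive direction is the converse: every integer filling of the staircase by entries in $[m]_0$ satisfying \eqref{Ashi} arises from an actual dominant region. For this I would invoke the existing classification of dominant regions of the $m$-Shi (= $m$-Catalan) arrangement for type $A_n$ from \cite{ath-rgcn-05,sh-acawg-97}, as cited; the cleanest route is to recall that a vector $(k_{i,j})$ is realized by a region if and only if the polyhedron $\{x \mid k_{i,j} \le \langle\alpha_{ij},x\rangle \le k_{i,j} + [k_{i,j}<m]\cdot 1 + [k_{i,j}=m]\cdot\infty\}$ is nonempty and full-dimensional, and that this in turn is equivalent to a system of inequalities among the $k$'s which is exactly \eqref{Ashi}. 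Concretely I would show that \eqref{Ashi}, being a set of constraints indexed by all triples $(i,\ell,j)$, is precisely the consistency condition (a min–max / shortest-path condition on the "staircase poset" of roots) guaranteeing that the intervals $[k_{i,j}, k'_{i,j}]$ can be simultaneously realized by a single $x$; this is the content of \cite[Lemma 2.5]{fvt-fsw-11}, to which the proposition defers, so in the write-up I would either cite that lemma directly for this implication or reproduce its short argument: build $x = (x_1,\dots,x_{n+1})$ with $x_i - x_{i+1} = \langle\alpha_i,x\rangle$ chosen generically inside the window forced by $k_{i,i}$, then verify inductively on $j-i$, using \eqref{Ashi} as the inductive hypothesis transfer, that every $\langle\alpha_{ij},x\rangle$ lands strictly inside $(k_{i,j},k'_{i,j})$.

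\textbf{Main obstacle.} The delicate point is the $\delta\in\{0,1\}$ ambiguity and making sure the converse direction produces a \emph{full-dimensional} region (not a wall) whose coordinate vector is exactly the prescribed one — i.e. that the system \eqref{Ashi}, which only pins down each $k_{i,j}$ to within $1$ of $k_{i\ell}+k_{\ell+1,j}$, is nonetheless globally consistent and leaves an open polytope's worth of valid $x$. The natural way to handle this is an induction on $j - i$: the base cases $j = i$ are free (each $k_{i,i}\in[m]_0$), and at each step one must check that the finitely many lower/upper bounds on $\langle\alpha_{ij},x\rangle$ coming from the various choices of $\ell$ via \eqref{Ashi} are mutually compatible, which is precisely where the hook/shortest-path structure of \eqref{acond} is used. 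Since the proposition explicitly offloads the detailed argument to \cite[Lemma 2.5]{fvt-fsw-11}, in the paper I would present the forward direction in full and cite that reference for the converse, remarking that the equivalence of \eqref{acond} and \eqref{Ashi} follows by unwinding the definition of ${\mathsf e}({\mathsf h}_{k_{i,j}})$ over the $j-i$ hooks of length $j-i+2$ with corner $k_{i,j}$.
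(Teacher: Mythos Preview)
Your proposal is correct and aligned with the paper's treatment: the paper does not give a proof of this proposition at all, but simply states that it ``can be deduced from results in \cite{ath-rgcn-05,sh-acawg-97}'' and that ``a more detailed proof can be found in \cite[Lemma 2.5]{fvt-fsw-11}.'' Your plan to spell out the forward direction via the additivity $\langle\alpha_{ij},x\rangle=\langle\alpha_{i\ell},x\rangle+\langle\alpha_{\ell+1,j},x\rangle$ and to defer to \cite[Lemma 2.5]{fvt-fsw-11} for the converse is therefore exactly what the paper does, only with more detail supplied; your bookkeeping identifying the hook endpoints with $k_{i\ell}$ and $k_{\ell+1,j}$ (via $r+s=j-i+1$) is also correct.
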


We say that the entry (or corner) $k_{i,j}$ of $T$ satisfies the Shi conditions, 
if  Equation (\ref{Ashi}) holds for all $i \leq \ell<j$. 
Furthermore, if Equation (\ref{Ashi}) holds for some fixed $\ell$, then we say that 
the triplet $\{k_{i,j},k_{i,\ell},k_{\ell+1,j}\} $ satisfies the Shi conditions. 

\begin{definition}
Let $T$ be a staircase Young diagram of size $(n,n-1,\ldots,1)$ 
and let $k_{i,j}$ be the entry of the box $(i,n-j+1)$. 
If  the Shi conditions hold for every $1\leq i\leq j\leq n$, we say that $T$ is a 
\emph{type-$A$} Shi tableau of size $(n,m)$. 
\end{definition}

Figure \ref{fig:hypera} 
illustrates  the dominant regions of the hyperplane arrangements 
$\mbox{Cat}^3(A_2)$ together with their Shi tableaux. 

\begin{figure}[h]
\begin{center}
\begin{tikzpicture}[scale=1.4] 
\path (0:6cm) coordinate (+O2); \path (0:-3cm) coordinate (-O2);  \node at (0:6.5cm) {$H_{\alpha_2,0}$};
\begin{scope}[yshift= 0.87 cm]
\path (0:6cm) coordinate (+T2); \path (0:-3cm) coordinate (-T2); \node at (0:6.5cm) {$H_{\alpha_2,1}$};
\end{scope} 
\begin{scope}[yshift= 1.74 cm]
\path (0:6cm) coordinate (+TT2); \path (0:-3cm) coordinate (-TT2); \node at (0:6.5cm) {$H_{\alpha_2,2}$};
\end{scope} 
\begin{scope}[yshift= 2.61 cm]
\path (0:6cm) coordinate (+TTT2); \path (0:-3cm) coordinate (-TTT2); \node at (0:6.5cm) {$H_{\alpha_2,3}$};
\end{scope}
\path (120:4cm) coordinate (+O12); \path (120:-1.2cm) coordinate (-O12); \node at (120:4.6 cm) { \rotatebox{-60}{$H_{\alpha_{12},0}$ }  };
\begin{scope}[xshift= 1cm] 
\path (120:4cm) coordinate (+T12); \path (120:-1.2cm) coordinate (-T12); \node at (120:4.6 cm) {\rotatebox{-60}{ $H_{\alpha_{12},1} $ }  };
\end{scope} 
\begin{scope}[xshift= 2cm] 
\path (120:4cm) coordinate (+TT12); \path (120:-1.2cm) coordinate (-TT12); \node at (120:4.6 cm) {\rotatebox{-60}{ $H_{\alpha_{12},2} $ }  };
\end{scope} 
\begin{scope}[xshift= 3cm] 
\path (120:4cm) coordinate (+TTT12); \path (120:-1.2cm) coordinate (-TTT12); \node at (120:4.6 cm) {\rotatebox{-60}{$H_{\alpha_{12},3} $ }  };
\end{scope} 
\path (60:5cm) coordinate (+O1); \path (60:-1.3cm) coordinate (-O1); \node at (60:5.5cm) { \rotatebox{60}{$H_{\alpha_1,0}$}};
\begin{scope}[xshift=1 cm]
\path (60:5cm) coordinate (+T1); \path (60:-1.3cm) coordinate (-T1);  \node at (60:5.5cm) { \rotatebox{60}{$H_{\alpha_1,1}$}}; 
\end{scope} 
\begin{scope}[xshift=2 cm]
\path (60:5cm) coordinate (+TT1); \path (60:-1.3 cm) coordinate (-TT1);  \node at (60:5.5cm) { \rotatebox{60}{$H_{\alpha_1,2}$}};  
\end{scope}
\begin{scope}[xshift=3 cm]
\path (60:5cm) coordinate (+TTT1); \path (60:-1.3 cm) coordinate (-TTT1);  \node at (60:5.5cm) { \rotatebox{60}{$H_{\alpha_1,3}$}}; 
\end{scope}

\draw[line width= 2pt] (-O2) -- (+O2);
\draw[line width= 2pt] (-O12) -- (+O12);
\draw[line width= 2pt] (-O1) -- (+O1);
\draw[line width= 0.7pt] (-T12) -- (+T12);
\draw[line width= 0.7pt] (-T2) -- (+T2);
\draw[line width= 0.7pt] (-T1) -- (+T1);
\draw[line width= 0.7pt] (-TT2) -- (+TT2);
\draw[line width= 0.7pt] (-TT12) -- (+TT12);
\draw[line width= 0.7pt] (-TT1) -- (+TT1);
\draw[line width= 0.7pt] (-TTT2) -- (+TTT2);
\draw[line width= 0.7pt] (-TTT12) -- (+TTT12);
\draw[line width= 0.7pt] (-TTT1) -- (+TTT1);

\begin{scope}[scale=0.2, xshift=1.5 cm, yshift=0.3 cm]
\draw (0,0) rectangle (1,2); \draw (0,1) rectangle (2,2);
\node at (.5,1.45) {\footnotesize 0};  
\node at (1.5,1.45) {\footnotesize 0}; 
\node at (.5,0.45) {\footnotesize 0}; 
\end{scope}
  
\begin{scope}[scale=0.2, xshift=4.3 cm, yshift=1.7 cm]
\draw (0,0) rectangle (1,2); \draw (0,1) rectangle (2,2);
\node at (.5,1.45) {\footnotesize 1};  
\node at (1.5,1.45) {\footnotesize 0}; 
\node at (.5,0.45) {\footnotesize 0}; 
\end{scope}
  
\begin{scope}[scale=0.2, xshift=4 cm, yshift=4.7 cm]
\draw (0,0) rectangle (1,2); \draw (0,1) rectangle (2,2);
\node at (.5,1.45) {\footnotesize 1}; 
\node at (1.5,1.45) {\footnotesize 0}; 
\node at (.5,0.45) {\footnotesize 1}; 
\end{scope}

\begin{scope}[scale=0.2, xshift=6.7 cm, yshift=6 cm]
\draw (0,0) rectangle (1,2); \draw (0,1) rectangle (2,2);
\node at (.5,1.45) {\footnotesize 2};  
\node at (1.5,1.45) {\footnotesize 0}; 
\node at (.5,0.45) {\footnotesize 1}; 
\end{scope}
  
\begin{scope}[scale=0.2, xshift=9 cm, yshift=4.5 cm]
\draw (0,0) rectangle (1,2); \draw (0,1) rectangle (2,2);
\node at (.5,1.45) {\footnotesize 2}; 
\node at (1.5,1.45) {\footnotesize 1}; 
\node at (.5,0.45) {\footnotesize 1}; 
\end{scope}
  
\begin{scope}[scale=0.2, xshift=6.5 cm, yshift=0.3 cm]
\draw (0,0) rectangle (1,2); \draw (0,1) rectangle (2,2);
\node at (.5,1.45) {\footnotesize 1}; 
\node at (1.5,1.45) {\footnotesize 1}; 
\node at (.5,0.45) {\footnotesize 0}; 
\end{scope}
  
\begin{scope}[scale=0.2, xshift=9.3 cm, yshift=1.7 cm]
\draw (0,0) rectangle (1,2); \draw (0,1) rectangle (2,2);
\node at (.5,1.45) {\footnotesize 2}; 
\node at (1.5,1.45) {\footnotesize 1}; 
\node at (.5,0.45) {\footnotesize 0}; 
\end{scope}
  
\begin{scope}[scale=0.2, xshift=11.5 cm, yshift=0.3 cm]
\draw (0,0) rectangle (1,2); \draw (0,1) rectangle (2,2);
\node at (.5,1.45) {\footnotesize 2}; 
\node at (1.5,1.45) {\footnotesize 2}; 
\node at (.5,0.45) {\footnotesize 0}; 
\end{scope}
  
\begin{scope}[scale=0.2, xshift=14.3 cm, yshift=1.7 cm]
\draw (0,0) rectangle (1,2); \draw (0,1) rectangle (2,2);
\node at (.5,1.45) {\footnotesize 3};  
\node at (1.5,1.45) {\footnotesize 2}; 
\node at (.5,0.45) {\footnotesize 0}; 
\end{scope}
  
\begin{scope}[scale=0.2, xshift=17.3 cm, yshift=0.7 cm]
\draw (0,0) rectangle (1,2); \draw (0,1) rectangle (2,2);
\node at (.5,1.45) {\footnotesize 3}; 
\node at (1.5,1.45) {\footnotesize 3}; 
\node at (.5,0.45) {\footnotesize 0}; 
\end{scope}
  
\begin{scope}[scale=0.2, xshift=20 cm, yshift=5 cm]
\draw (0,0) rectangle (1,2); \draw (0,1) rectangle (2,2);
\node at (.5,1.45) {\footnotesize 3};  
\node at (1.5,1.45) {\footnotesize 3}; 
\node at (.5,0.45) {\footnotesize 1}; 
\end{scope}
  
\begin{scope}[scale=0.2, xshift=15 cm, yshift=5 cm]
\draw (0,0) rectangle (1,2); \draw (0,1) rectangle (2,2);
\node at (.5,1.45) {\footnotesize 3}; 
\node at (1.5,1.45) {\footnotesize 2}; 
\node at (.5,0.45) {\footnotesize 1}; 
\end{scope}
  
\begin{scope}[scale=0.2, xshift=11.7 cm, yshift=6 cm]
\draw (0,0) rectangle (1,2); \draw (0,1) rectangle (2,2);
\node at (.5,1.45) {\footnotesize 3}; 
\node at (1.45,1.45) {\footnotesize 1}; 
\node at (.5,0.45) {\footnotesize 1}; 
\end{scope}

\begin{scope}[scale=0.2, xshift=22.5 cm, yshift=9.5 cm]
\draw (0,0) rectangle (1,2); \draw (0,1) rectangle (2,2);
\node at (.5,1.45) {\footnotesize 3};  
\node at (1.5,1.45) {\footnotesize 3}; 
\node at (.5,0.45) {\footnotesize 2}; 
\end{scope}

\begin{scope}[scale=0.2, xshift=17.5 cm, yshift=9.5 cm]
\draw (0,0) rectangle (1,2); \draw (0,1) rectangle (2,2);
\node at (.5,1.45) {\footnotesize 3};  
\node at (1.5,1.45) {\footnotesize 2}; 
\node at (.5,0.45) {\footnotesize 2}; 
\end{scope}
  
\begin{scope}[scale=0.2, xshift=13 cm, yshift=9.5 cm]
\draw (0,0) rectangle (1,2); \draw (0,1) rectangle (2,2);
\node at (.5,1.45) {\footnotesize 3}; 
\node at (1.5,1.45) {\footnotesize 1}; 
\node at (.5,0.45) {\footnotesize 2}; 
\end{scope}

\begin{scope}[scale=0.2, xshift=9.2 cm, yshift=10.5 cm]
\draw (0,0) rectangle (1,2); \draw (0,1) rectangle (2,2);
\node at (.5,1.45) {\footnotesize 3}; 
\node at (1.5,1.45) {\footnotesize 0}; 
\node at (.5,0.45) {\footnotesize 2}; 
\end{scope}  
 
\begin{scope}[scale=0.2, xshift=6.5 cm, yshift=9 cm]
\draw (0,0) rectangle (1,2); \draw (0,1) rectangle (2,2);
\node at (.5,1.45) {\footnotesize 2}; 
\node at (1.5,1.45) {\footnotesize 0}; 
\node at (.5,0.45) {\footnotesize 2}; 
\end{scope}  
  
\begin{scope}[scale=0.2, xshift=10.5 cm, yshift=14 cm]
\draw (0,0) rectangle (1,2); \draw (0,1) rectangle (2,2);
\node at (.5,1.45) {\footnotesize 3}; 
\node at (1.5,1.45) {\footnotesize 0}; 
\node at (.5,0.45) {\footnotesize 3}; 
\end{scope} 
 
\begin{scope}[scale=0.2, xshift=15.5 cm, yshift=14 cm]
\draw (0,0) rectangle (1,2); \draw (0,1) rectangle (2,2);
\node at (.5,1.45) {\footnotesize 3};  
\node at (1.5,1.45) {\footnotesize 1}; 
\node at (.5,0.45) {\footnotesize 3}; 
\end{scope} 
    
\begin{scope}[scale=0.2, xshift=20 cm, yshift=14 cm]
\draw (0,0) rectangle (1,2); \draw (0,1) rectangle (2,2);
\node at (.5,1.45) {\footnotesize 3};  
\node at (1.5,1.45) {\footnotesize 2}; 
\node at (.5,0.45) {\footnotesize 3}; 
\end{scope}
  
\begin{scope}[scale=0.2, xshift=25 cm, yshift=14 cm]
\draw (0,0) rectangle (1,2); \draw (0,1) rectangle (2,2);
\node at (.5,1.45) {\footnotesize 3};  
\node at (1.5,1.45) {\footnotesize 3}; 
\node at (.5,0.45) {\footnotesize 3}; 
\end{scope}
    
\end{tikzpicture}
\caption{The hyperplane arrangement $\mbox{Cat}^3(A_2)$ together with the Shi tableaux 
that correspond to each dominant region.}
\label{fig:hypera}
\end{center}
\end{figure}
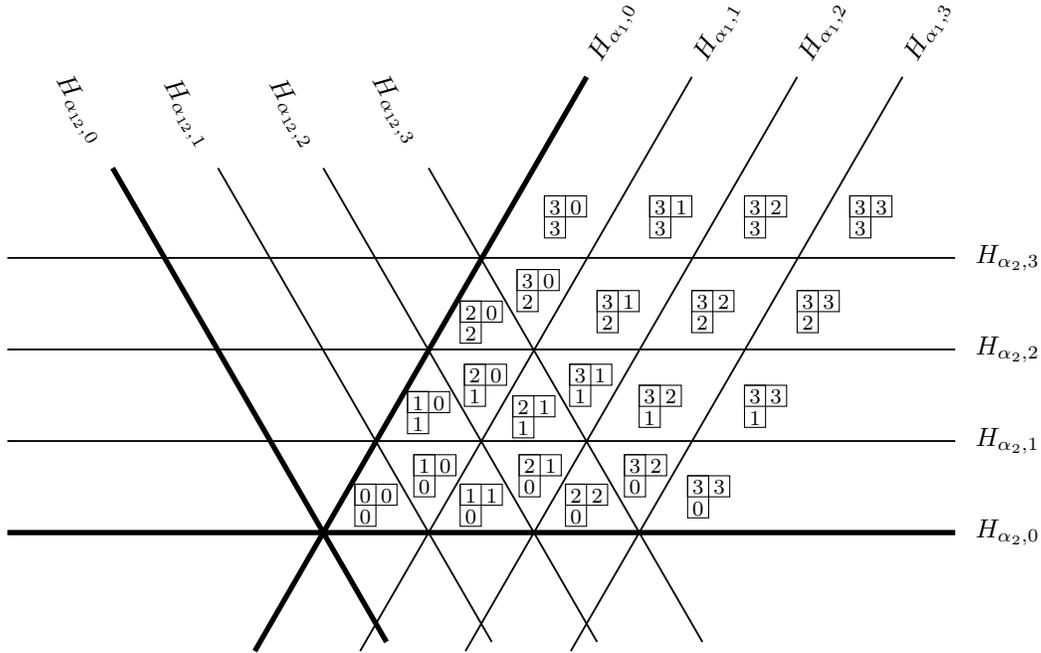

\section{From dominant regions $\rR^m_n$ to partitions in $\pP^m_n$}
\label{regions to facets}
In this section we prove Theorem \ref{all partitions} combinatorially.
In particular, we provide a bijection $\varphi_n$ between the set $\rR^m_n$ of dominant regions 
and the set $\pP^m_n$ of type-$A$ partitions. 
More precisely, for a simple root $\alpha$ of type $A_n$, the bijection $\varphi_n$ characterizes the dominant regions of $\rR^m_n$ 
which are separated from the origin by the hyperplane $H_{\alpha, m}$, 
it terms of partitions in $\pP^m_n$. 
The formula for $\varphi_n$ is given in Theorem~\ref{map1} and its inverse is explicitly described in 
Theorem~\ref{restatement}. 
Before we proceed to the proofs, 
a few comments on the history of this
bijection are in order. In \cite{st-haiot-96,st-hapfti-98}, Stanley
defined a bijection between regions in the extended
Shi arrangement and generalized parking functions. In both papers, the
bijection is recursively constructed. When restricted to the dominant
chamber, the parking functions in the image can be seen as partitions
and ours agrees with it. Richards \cite{ri-sdn-96} explictly defined a
bijection from dominant regions to partitions and again, ours agrees
with it. He proved it was an injection and used the sizes of the sets
involved to prove it was a bijection. In Theorem~\ref{restatement}, we
give a formula for the inverse of the function in \cite{ri-sdn-96}, thereby
showing directly that we have a bijection.

\

  Let $\tT^m_n$ denote the set of staircase tableaux of size $(n,n-1,\dots,1)$, whose entries are positive integers 
  between $0$ and $m$, 
  and let $k_{i,j}$ be the entry of the box $(i,n-j+1)$. 
  Note  that $\rR^m_n$ can be considered  as a proper 
  subset of $\tT^m_n$. We define $\phi_n:\tT^m_n\to \pP^m_n$ to be the map which sends each tableau 
  $T=\{k_{i,j} \mid 1\leq i\leq j\leq n\}\in\tT^m_n$ to the partition $\lambda_T=(\lambda_1,\lambda_2,\dots,\lambda_n)$ with parts 
  $\lambda_i=\sum \limits_{j=i}^n k_{i,j}$. Clearly $\phi_n$ is a surjection. Let $\varphi_n$ denote 
  the restriction of the map $\phi_n$ to the set $\rR^m_n$. 
  
\begin{theorem}
\label{map1}
  The map $\varphi_n$ is a bijection between the sets $\rR^m_n$ and $\pP^m_n$.
\end{theorem}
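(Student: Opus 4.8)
\textbf{Proof plan for Theorem~\ref{map1}.}
The plan is to show that $\varphi_n = \phi_n|_{\rR^m_n}$ is both injective and surjective onto $\pP^m_n$, and to do so constructively: I will exhibit an explicit inverse map $\pP^m_n \to \rR^m_n$, which simultaneously establishes both properties. First I would check that $\varphi_n$ is well defined, i.e. that $\lambda_T \in \pP^m_n$ whenever $T$ is a genuine type-$A$ Shi tableau. This amounts to verifying $0 \le \lambda_i \le m(n-i+1)$; the lower bound is immediate since all $k_{i,j}\in[m]_0$, and the upper bound $\lambda_i = \sum_{j=i}^n k_{i,j} \le m(n-i+1)$ is automatic because there are exactly $n-i+1$ summands, each at most $m$. (Note this much uses only that $T\in\tT^m_n$, which is why $\phi_n$ is a surjection onto $\pP^m_n$ but not yet a bijection.)

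The heart of the argument is reconstructing the Shi tableau $T$ from the partition $\lambda = (\lambda_1,\dots,\lambda_n)$. The key observation is that the Shi conditions \eqref{Ashi} let one recover the individual entries $k_{i,j}$ from the row sums by a downward/leftward recursion. Concretely, I would fix a row $i$ and recover $k_{i,n}, k_{i,n-1}, \dots, k_{i,i}$ in order, or alternatively process the whole tableau by diagonals $j-i = 0,1,2,\dots$. The point is that once all entries $k_{i',j'}$ with $j'-i' < j-i$ are known, the entry $k_{i,j}$ is constrained: by \eqref{Ashi} applied with, say, $\ell = i$, we have $k_{i,j} \in \{k_{i,i}+k_{i+1,j},\, k_{i,i}+k_{i+1,j}+1\}$ when that sum is $<m$, and $k_{i,j}=m$ otherwise — so there is exactly one bit of freedom ($\delta\in\{0,1\}$) per entry, and the constraint that the row sum equal $\lambda_i$ must pin down all these bits. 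The cleanest route is to show by induction on $j-i$ that knowing $\lambda_1,\dots,\lambda_n$ determines $k_{i,j}$ uniquely: within row $i$, once $k_{i,j}$ for $j>J$ are known the partial sum is fixed, the entry $k_{i,J}$ is forced to lie in a two-element set by \eqref{Ashi}, and compatibility with the remaining budget $\lambda_i - \sum_{j>J}k_{i,j}$ together with the constraints from lower rows selects the correct value. Running this recursion shows $\varphi_n$ is injective, and checking that the reconstructed $T$ always satisfies all the Shi conditions \eqref{Ashi} (not just the ones used in the recursion) shows the map $\lambda\mapsto T$ lands in $\rR^m_n$, giving surjectivity; one then verifies $\varphi_n(T)=\lambda$ to confirm it is a genuine two-sided inverse.

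I expect the main obstacle to be the injectivity/inverse step: one must argue that the bits $\delta$ are genuinely forced, i.e. that no two distinct Shi tableaux can share a row-sum vector. The subtlety is that \eqref{Ashi} must hold for \emph{all} splits $i\le\ell<j$ simultaneously, and the "otherwise" ($k_{i,j}=m$) branch behaves non-linearly, so a naive degrees-of-freedom count (one $\delta$ per box, one constraint $\lambda_i$ per row, sizes match) is only heuristic. The rigorous argument will need the monotonicity/consistency structure of Shi tableaux — e.g. that entries are weakly monotone along hooks — to show the reconstruction is deterministic and never gets stuck. Since the paper explicitly defers the clean formula for the inverse to Theorem~\ref{restatement}, I would in practice prove Theorem~\ref{map1} here in the weaker form ``$\varphi_n$ is injective, hence bijective by the cardinality count $|\rR^m_n| = |\pP^m_n| = \frac{1}{m(n+1)+1}\binom{(m+1)(n+1)}{n+1}$'' (both sides being the $m$-Catalan number of the relevant size, as recalled in Sections~\ref{sec:intro} and~\ref{partitions}), and leave the explicit inverse construction — which upgrades this to a direct proof — to Theorem~\ref{restatement}. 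The cardinality shortcut sidesteps the hardest part of surjectivity while still yielding the theorem as stated.
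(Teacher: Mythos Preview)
Your plan would yield a valid proof, but it inverts the paper's logical structure and its fallback is exactly the argument the paper is written to supersede. In the paper, Theorem~\ref{map1} is proved \emph{after} Theorem~\ref{restatement}, Corollary~\ref{cor:kij}, and Lemma~\ref{welldefined}, and simply cites them: Theorem~\ref{restatement} supplies the explicit inverse $\bar\varphi_n$ via the ceiling formula~\eqref{kij} and shows $\phi_n\circ\bar\varphi_n=\mathrm{id}$; Lemma~\ref{welldefined} proves $\bar\varphi_n(\pP^m_n)=\rR^m_n$ directly, by induction on $n$ using the sub-tableaux of Lemma~\ref{subtableau} and the monotonicity Lemmas~\ref{decrease} and~\ref{decrease2}. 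No cardinality is invoked anywhere --- indeed, the paper explicitly remarks (just before Theorem~\ref{map1}) that Richards proved injectivity and then appealed to $|\rR^m_n|=|\pP^m_n|$, and that the point of Theorem~\ref{restatement} is to give the inverse formula and thereby avoid that shortcut. So your ``in practice'' recommendation (injectivity plus the $m$-Catalan count) is precisely Richards's route, not the paper's.

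Your own inverse-construction sketch --- recover the $\delta$ bits by induction on $j-i$ --- is in the right spirit but differs from what the paper does. The paper does not peel off one bit at a time; instead it writes down $k_{i,j}$ all at once via~\eqref{kij} (a minimum of $m$ and a ceiling of an average), and the substantial work is Lemma~\ref{welldefined}: verifying that these numbers satisfy the Shi conditions~\eqref{Ashi} for \emph{every} split $\ell$, handled by an induction on $n$ that reduces to four smaller tableaux $T^{(1)},\dots,T^{(4)}$ obtained by deleting rows or columns. Your anticipated obstacle (that the $\delta$'s are only heuristically determined and the $k_{i,j}=m$ branch is nonlinear) is real, and the paper resolves it not by a local bit-forcing argument but by this global formula plus the row/column monotonicity lemmas. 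Either route works, but the paper's buys an explicit closed-form inverse and a cardinality-free proof, which is its stated aim.
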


It is immediate from the definition of the Shi tableaux that the map $\varphi_n$ is well defined. 
In order to show that $\varphi_n$ is a bijection we will first construct its inverse. 
The first step towards this direction constitutes the next result. 

\begin{theorem}
\label{restatement} 
  Let $\lambda=(\lambda_1,\lambda_2,\dots,\lambda_n)\in\pP^m_n$. 
  For every $1\leq i\leq j\leq n$ define  
  \begin{equation}
  \label{kij}
    k_{i,j}(\lambda)=\min\left\{m,\left\lceil
    \frac{\lambda_i-
    \sum\limits_{\ell=j+1}\limits^n k_{i,\ell}(\lambda)+ 
    \sum\limits_{\ell= i+1}\limits^j k_{\ell,j}(\lambda)}{j-i+1} \right\rceil\right\}. 
  \end{equation} 
  The map $\bar{\varphi}_n:\pP^m_n\to \tT^m_n$, which sends each $\lambda$ to the set 
  $\{k_{i,j}(\lambda) \mid  1\leq i\leq j\leq n\}$, is an injection.   
\end{theorem}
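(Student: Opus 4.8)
The plan is to show that $\bar\varphi_n$ is an injection by exhibiting a one-sided inverse, namely the map $\phi_n$ of the preceding discussion restricted appropriately: concretely, I will prove that for every $\lambda\in\pP^m_n$ the tableau $\bar\varphi_n(\lambda)=\{k_{i,j}(\lambda)\}$ satisfies $\sum_{j=i}^n k_{i,j}(\lambda)=\lambda_i$ for each $i\in[n]$, so that $\phi_n\circ\bar\varphi_n=\mathrm{id}_{\pP^m_n}$, which immediately forces $\bar\varphi_n$ to be injective. First I would observe that the recursion (\ref{kij}) is well-posed: $k_{i,j}(\lambda)$ is defined in terms of entries $k_{i,\ell}(\lambda)$ with $\ell>j$ (same row, further right) and $k_{\ell,j}(\lambda)$ with $\ell>i$ (same column, further down), so processing the boxes of the staircase in order of increasing hook length (equivalently, decreasing $j-i$), or simply by induction on $j-i$, each entry is determined before it is used. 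It is also clear from the $\min\{m,\cdot\}$ and the $\lceil\cdot\rceil$ that each $k_{i,j}(\lambda)\in[m]_0$, so $\bar\varphi_n(\lambda)\in\tT^m_n$ as claimed (this needs the crude bound that the numerator in (\ref{kij}) is nonnegative, which I address below).

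The heart of the argument is the identity $\sum_{j=i}^n k_{i,j}(\lambda)=\lambda_i$. Fix $i$ and argue by downward induction on $j$ from $n$ to $i$, proving the partial-sum statement
\[
\sum_{\ell=j}^n k_{i,\ell}(\lambda)=\min\Bigl\{\text{(something)},\ \lambda_i+\sum_{\ell=i+1}^{?}\cdots\Bigr\},
\]
but a cleaner route is the following. Set $S_{i,j}=\lambda_i-\sum_{\ell=j+1}^n k_{i,\ell}(\lambda)+\sum_{\ell=i+1}^{j}k_{\ell,j}(\lambda)$, the numerator in (\ref{kij}), so $k_{i,j}(\lambda)=\min\{m,\lceil S_{i,j}/(j-i+1)\rceil\}$. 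The goal $\sum_{j=i}^n k_{i,j}(\lambda)=\lambda_i$ is equivalent, after telescoping, to the single equation $k_{i,i}(\lambda)=\lambda_i-\sum_{\ell=i+1}^n k_{i,\ell}(\lambda)=S_{i,i}$ (note the column-sum term is empty when $j=i$), i.e.\ to showing that for $j=i$ the minimum in (\ref{kij}) is attained by the ceiling branch and that $S_{i,i}$ is already an integer in $[0,m]$ so the ceiling is vacuous. Thus the real content is a uniform bound: $0\le S_{i,j}\le (j-i+1)m$ for all $i\le j$, with the refinement that $S_{i,i}\in[0,m]$. I would prove $0\le S_{i,j}$ by induction on $j-i$, using that each previously-defined $k$ is at most $m$ (to control the subtracted row-terms $\sum_{\ell=j+1}^n k_{i,\ell}$) together with the staircase hypothesis $\lambda_i\le (n-i+1)m$; and $S_{i,j}\le (j-i+1)m$ similarly, using $k_{i,\ell}(\lambda)\ge 0$ and bounding the added column-terms by $\le (j-i)m$ against $\lambda_i\le(n-i+1)m$ — the arithmetic here is where the precise exponents of the $m$-staircase shape get used, and these inequalities should be arranged so that they close simultaneously over the induction. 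Once $S_{i,j}\in[0,(j-i+1)m]$ is known, and once one checks (again by the same induction, peeling off the rightmost box in each row) that the row partial sums behave additively, the $j=i$ case gives $S_{i,i}\in[0,m]\cap\ZZ$ and hence $k_{i,i}(\lambda)=S_{i,i}$, completing $\phi_n(\bar\varphi_n(\lambda))=\lambda$.

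The main obstacle I anticipate is the bookkeeping in the double induction establishing $0\le S_{i,j}\le (j-i+1)m$: the numerator $S_{i,j}$ mixes a \emph{subtracted} partial row-sum to the right of $j$ and an \emph{added} partial column-sum below the diagonal, so the naive bound using only $0\le k\le m$ for each entry is not tight enough on its own — one has to feed back the already-proven partial-sum identities ($\sum_{\ell=j+1}^n k_{i,\ell}(\lambda)\le \lambda_i$ for the lower bound, and the complementary inequality for the upper bound) into the induction, so the invariant being carried has to be chosen carefully to be self-strengthening. Everything else — well-definedness, membership in $\tT^m_n$, the telescoping reduction to the $j=i$ case, and the deduction of injectivity from $\phi_n\circ\bar\varphi_n=\mathrm{id}$ — is routine. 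I note that this only gives injectivity; surjectivity of $\bar\varphi_n$ onto $\rR^m_n\subseteq\tT^m_n$, i.e.\ that $\bar\varphi_n(\lambda)$ actually satisfies the Shi conditions (\ref{Ashi}) and is the inverse of $\varphi_n$, is a separate matter presumably handled in the proof of Theorem~\ref{map1}.
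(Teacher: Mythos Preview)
Your overall strategy---prove $\phi_n\circ\bar\varphi_n=\mathrm{id}_{\pP^m_n}$ by establishing the row-sum identity $\sum_{j=i}^n k_{i,j}(\lambda)=\lambda_i$, and deduce injectivity---is exactly what the paper does, and your remark that surjectivity onto $\rR^m_n$ is deferred is also correct. Two points about the execution.

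First, a small correction: the recursion is \emph{not} well-founded under induction on $j-i$. The entry $k_{i,j}$ depends on $k_{i,\ell}$ with $\ell>j$ (which has $\ell-i>j-i$) and on $k_{\ell,j}$ with $\ell>i$ (which has $j-\ell<j-i$), so both larger and smaller values of the difference occur. A valid order is rows $i=n,n-1,\dots,1$, and within each row $j=n,n-1,\dots,i$.

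Second, and more substantively, the paper does not attempt your direct two-sided bound on $S_{i,j}$. It instead proves a separate lemma (Lemma~\ref{decrease}) establishing row monotonicity $k_{i,j}\ge k_{i,j-1}$, by induction on $n$. With this in hand the row-sum identity is a two-line dichotomy: if $k_{i,i}<m$ the $j=i$ instance of (\ref{kij}) literally reads $k_{i,i}=\lambda_i-\sum_{\ell>i}k_{i,\ell}$; if $k_{i,i}=m$, monotonicity forces every $k_{i,\ell}=m$, and then (\ref{kij}) at $j=i$ gives $\lambda_i\ge(n-i+1)m$, hence equality by the staircase bound. Your upper-bound induction (showing $R_{i,j}:=\lambda_i-\sum_{\ell>j}k_{i,\ell}\le(j-i+1)m$, hence $R_{i,i}\le m$) does go through using only $k_{\ell,j}\ge0$ for lower rows, and would indeed replace the paper's use of monotonicity in the $k_{i,i}=m$ case. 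But the \emph{lower} bound---each $k_{i,j}\ge0$, needed both so that $\bar\varphi_n(\lambda)\in\tT^m_n$ and so that the column sums $\sum_{\ell>i}k_{\ell,j}\ge0$ feeding your upper-bound induction are valid at the next row up---is precisely what you identify as ``the main obstacle,'' and your sketch does not close it: the proposed feedback ``$\sum_{\ell>j}k_{i,\ell}\le\lambda_i$'' is the inequality $R_{i,j}\ge0$ itself. The row-monotonicity lemma is exactly the ``self-strengthening invariant'' you are looking for; any alternative (for instance, proving $R_{\ell,j}\le R_{i,j}$ for $\ell>i$ simultaneously) ends up being of comparable strength.
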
 

In the remainder of this section, when there is no ambiguity 
we will write $k_{i,j}$ instead of $k_{i,j}(\lambda)$. For the proof of Theorem  \ref{restatement}
we need the following lemma. 

\begin{lemma}
\label{decrease} 
  Let $\lambda\in\pP^m_n$, and let $k_{i,j}(\lambda)=k_{i,j}$ be defined as in Equation \eqref{kij}.  
  Then, $k_{i,j}\geq k_{i,j-1}$ for all $1\leq i< j\leq n$. 
\end{lemma}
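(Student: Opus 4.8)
The plan is to peel off the first row and induct on $n$ (equivalently, to run a downward induction on the row index). First I would record the structural observation that for $2\le i\le j\le n$ the recursion \eqref{kij} defining $k_{i,j}(\lambda)$ refers only to entries $k_{i',j'}(\lambda)$ with $i'\ge 2$, and moreover that the sub-array $\{k_{i,j}(\lambda)\mid 2\le i\le j\le n\}$ coincides entry-for-entry with $\bar\varphi_{n-1}$ applied to the truncated partition $(\lambda_2,\dots,\lambda_n)$; the latter lies in $\pP^m_{n-1}$ since $\lambda\in\pP^m_n$ forces $\lambda_{i+1}\le m(n-i)$ for each $i$. Thus by induction on $n$ I may assume the conclusion for all rows $\ge 2$, i.e.\ $k_{i,j}\ge k_{i,j-1}$ for $2\le i<j\le n$, and it remains to treat the first row: $k_{1,j}\ge k_{1,j-1}$ for $2\le j\le n$.

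For the first row I would abbreviate the numerator in \eqref{kij} by
\[ N_j \;=\; \lambda_1-\sum_{\ell=j+1}^{n}k_{1,\ell}+\sum_{\ell=2}^{j}k_{\ell,j}, \]
so that $k_{1,j}=\min\{m,\lceil N_j/j\rceil\}$, and observe the one-line identity
\[ N_j-N_{j-1}\;=\;k_{1,j}+k_{j,j}+\sum_{\ell=2}^{j-1}\bigl(k_{\ell,j}-k_{\ell,j-1}\bigr). \]
By the inductive hypothesis every summand $k_{\ell,j}-k_{\ell,j-1}$ with $2\le\ell\le j-1$ is $\ge 0$, and $k_{j,j}\ge 0$; hence $N_j\ge N_{j-1}+k_{1,j}$. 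I then finish with a ceiling estimate: if $k_{1,j}=m$ the inequality $k_{1,j}\ge k_{1,j-1}$ is immediate; otherwise $k_{1,j}=\lceil N_j/j\rceil$, so $N_j\le j\,k_{1,j}$, hence $N_{j-1}\le N_j-k_{1,j}\le (j-1)k_{1,j}$, and since $k_{1,j}$ is an integer this gives $\lceil N_{j-1}/(j-1)\rceil\le k_{1,j}$, so $k_{1,j-1}=\min\{m,\lceil N_{j-1}/(j-1)\rceil\}\le k_{1,j}$.

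The step that carries the real weight is having $k_{j,j}\ge 0$ available; for the lower rows this is supplied by the induction, so I would fold nonnegativity of all entries into the inductive statement. By the monotonicity just proved, $k_{1,1}$ is the smallest entry of the first row, so nonnegativity of the first row reduces to $k_{1,1}\ge 0$, i.e.\ to $\sum_{\ell=2}^{n}k_{1,\ell}\le\lambda_1$, and this is exactly where the hypothesis $\lambda\in\pP^m_n$ (the bound $\lambda_1\le mn$) is used. Writing $R_j=\lambda_1-\sum_{\ell=j}^{n}k_{1,\ell}$ for the part of $\lambda_1$ not yet distributed among columns $\ge j$, the truncation by $m$ in \eqref{kij} can only decrease an entry and hence only helps keep $R_j\ge 0$; what must still be ruled out is an untruncated entry $k_{1,j}$ overshooting $R_{j+1}$. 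I expect this to be the main obstacle, and I would handle it by strengthening the induction with a comparison of the undistributed mass between consecutive rows (roughly $R_j^{(i)}\ge R_j^{(i+1)}$ throughout the recursion), which propagates alongside the monotonicity and pins down $R_j\ge 0$.
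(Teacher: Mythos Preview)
Your skeleton matches the paper's: induct on $n$, note that the sub-array $\{k_{i,j}:i\ge 2\}$ is exactly $\bar\varphi_{n-1}$ applied to $(\lambda_2,\dots,\lambda_n)\in\pP^m_{n-1}$, and thereby reduce to the first row. Where you diverge is in how row $1$ is handled. The paper proves only the single inequality $k_{1,n}\ge k_{1,n-1}$; the nonnegativity input it needs there is just $k_{n,n}=\lambda_n\ge 0$, which is immediate. You instead prove $k_{1,j}\ge k_{1,j-1}$ for every $j$ via the identity for $N_j-N_{j-1}$ and the ceiling estimate. This is cleaner and more self-contained (the paper does not spell out how the cases $j<n$ of row $1$ are meant to follow), but, as you correctly flag, it requires $k_{j,j}\ge 0$ for all $2\le j\le n$, which forces nonnegativity into the inductive hypothesis.

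The only soft spot is closing that nonnegativity loop for row $1$. Your ``undistributed mass'' comparison does work: one shows $R_j^{(1)}\ge R_j^{(2)}$ by downward induction on $j$ (base case $\lambda_1\ge\lambda_2$), and then $R_1^{(1)}\ge R_1^{(2)}\ge 0$ by the inductive hypothesis applied to row $2$. However, the propagation step $R_{j-1}^{(1)}\ge R_{j-1}^{(2)}$ needs a short case split on which of $k_{1,j}$, $k_{2,j}$ is truncated to $m$, and you have not written this out. One correction: the relevant input from $\lambda\in\pP^m_n$ at this point is the nonincreasing condition $\lambda_1\ge\lambda_2$ (which seeds the comparison), not the upper bound $\lambda_1\le mn$.
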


\begin{proof}
  We proceed by induction on $n$. 
  Let $n=2$. We will show that $k_{1,2}\geq k_{1,1}$. If $k_{1,2}=m$ our claim is trivially true, 
  so we  may suppose that $k_{1,2}<m$.  
  Since $k_{1,1}=\min\{m,\lambda_1-k_{1,2}\}$, it is enough to show that $k_{1,2}\geq \lambda_1-k_{1,2}$, 
  or equivalently that $2k_{1,2}\geq\lambda_1$. 
  We have $2k_{1,2}= 2\left\lceil \frac{\lambda_1+k_{2,2}}{2}\right\rceil \geq \lambda_1+k_{2,2}.$ 
  Now, from  Equation \eqref{kij} we deduce that $\lambda_2=k_{2,2}$, and therefore 
  $2k_{1,2}\geq\lambda_1+\lambda_2\geq\lambda_1$ and we are done. 
 
  By applying induction, we may assume that $k_{l,n}\geq k_{l,n-1}$ for 
  every $2\leq \ell\leq n-1$, and thus it remains to show that $k_{1,n}\geq k_{1,n-1}$ as well.  
  As before, the result is trivial when $k_{1,n}=m$, so we may suppose that $k_{1,n}<m. $ 
  In this case 
  $k_{1,n}=\left\lceil\frac{\lambda_1+\sum_{\ell=2}^n k_{\ell,n}}{n}\right\rceil = 
  \frac{\lambda_1+\sum_{\ell=2}^n k_{\ell,n}+ \upsilon}{n}$ for some $\upsilon\in[n-1]_{\geq 0}$.  
  
  We first  show that 
  \[\frac{\lambda_1+\sum_{\ell=2}^n k_{\ell,n}}{n}\geq\frac{\lambda_1-k_{1,n}+\sum_{\ell=2}^{n-1} k_{\ell,n-1}}{n-1}.\]
  Indeed, the above inequality holds if and only if 
  \[\begin{aligned} 
  (n-1)(\lambda_1+\sum_{\ell=2}^n k_{\ell,n}) \geq n(\lambda_1-k_{1,n}+
  \sum_{\ell=2}^{n-1} k_{\ell,n-1}) &  \Leftrightarrow\\
  -\lambda_1+(n-1)\sum_{\ell=2}^n k_{\ell,n}+nk_{1,n}-n\sum_{\ell=2}^{n-1} k_{\ell,n-1} \geq 0 & \Leftrightarrow\\
  (n-1)\sum_{\ell=2}^n k_{\ell,n}+\sum_{\ell=2}^n k_{\ell,n}+\upsilon-n\sum_{\ell=2}^{n-1} k_{\ell,n-1} \geq 0 & \Leftrightarrow\\
  n\left(\sum_{\ell=2}^n k_{\ell,n}-\sum_{\ell=2}^{n-1} k_{\ell,n-1}\right)+\upsilon \geq 0 & \Leftrightarrow\\
        n \sum_{\ell=2}^{n-1} \left(  k_{\ell,n}- k_{\ell,n-1}\right)+  n k_{n,n}  + \upsilon \geq 0,
    \end{aligned} \] 
  which holds, since $k_{n,n}=\lambda_n \geq 0,\ \upsilon \geq 0$ and (by induction) $k_{l,n}\geq k_{l,n-1}$ for 
  every $2\leq \ell\leq n-1$. 
  
  Thus \[\frac{\lambda_1+\sum_{\ell=2}^n k_{\ell,n}}{n}\geq\frac{\lambda_1-k_{1,n}+\sum_{\ell=2}^{n-1} k_{\ell,n-1}}{n-1},\] 
  and therefore 
  \[\left\lceil\frac{\lambda_1+\sum_{\ell=2}^n k_{\ell,n}}{n}\right\rceil\geq
  \left\lceil\frac{\lambda_1-k_{1,n}+\sum_{\ell=2}^{n-1} k_{\ell,n-1}}{n-1}\right\rceil,\] 
  or equivalently, since $k_{1,n}<m$, 
  \[k_{1,n}\geq\left\lceil\frac{\lambda_1-k_{1,n}+\sum_{\ell=2}^{n-1} k_{\ell,n-1}}{n-1}\right\rceil.\]
  Finally, since
  $k_{1,n-1}=\min\left\{m,\left\lceil\frac{\lambda_1-k_{1,n}+\sum_{\ell=2}^{n-1} k_{\ell,n-1}}{n-1}\right\rceil\right\}$, 
  we deduce that  $k_{1,n}\geq k_{1,n-1}$. 
  This completes the induction and the proof of the lemma.
\end{proof}

\begin{proof}[Proof of Theorem \ref{restatement}]
  It suffices to show that 
  if $\phi_n(\bar{\varphi}_n(\lambda)):=\mu = (\mu_1,\mu_2,\dots,\mu_n)$, then $\mu=\lambda$. 
  We fix some $1\leq i \leq n$ and distinguish two cases: (1) $k_{i,i}(\lambda)<m$ and (2) $k_{i,i}(\lambda)=m$. 

  (1) Let $k_{i,i}(\lambda)<m$. Then, Equation (\ref{kij}) implies that   
  $k_{i,i}(\lambda)=\lambda_i-\sum\limits_{\ell=i+1}^n k_{i,\ell}(\lambda)$, therefore, 
  $\lambda_i=\sum\limits_{\ell=i}^n k_{i,\ell}(\lambda)=\mu_i$.

  (2) Let $k_{i,i}(\lambda)=m$. Then, it follows from Lemma \ref{decrease} that 
  $k_{i,\ell}(\lambda)=m$ for all $i<\ell\leq n$. Furthermore, since  $k_{i,i}(\lambda)=m$, Equation (\ref{kij}) implies  that 
  $\lambda_i-\sum\limits_{\ell=i+1}^n k_{i,\ell}(\lambda)\geq m$ or equivalently that  
  $\lambda_i \geq m+ \sum\limits_{\ell=i+1}^n k_{i,\ell}(\lambda)=m(n-i+1)$. 
  However, $\lambda\in \pP^m_n$, thus 
  $\lambda_i \leq m(n-i+1)$. 
  We therefore conclude that $\lambda_i=m(n-i+1)=\sum\limits_{\ell=i+1}^n k_{i,\ell}(\lambda)=\mu_i$. 

  Hence $(\mu_1,\mu_2,\dots,\mu_n)=\lambda$. From the above discussion we deduce that 
  for every $\lambda \in\pP^m_n$ we have  $\phi_n(\bar{\varphi}_n(\lambda))=\lambda$. 
  This implies that $\bar{\varphi}_n$ is an injection.  
\end{proof}

\begin{corollary}
\label{cor:kij}
Let $\lambda\in \pP^m_n$, and let $k_{i,j}(\lambda)=k_{i,j}$ be defined as in Equation (\ref{kij}).  
Then, $\lambda_i=\sum\limits_{j=i}\limits^nk_{i,j}$, for every $i\in[n]$.
\end{corollary}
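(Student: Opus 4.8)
The plan is to derive Corollary \ref{cor:kij} directly from the case analysis already carried out in the proof of Theorem \ref{restatement}. Recall that $\phi_n$ sends a tableau $\{k_{i,j}\}$ to the partition with parts $\sum_{j=i}^n k_{i,j}$, and that $\bar\varphi_n(\lambda)$ is by definition the tableau $\{k_{i,j}(\lambda)\}$. So the assertion $\lambda_i=\sum_{j=i}^n k_{i,j}(\lambda)$ for all $i$ is precisely the statement that $\phi_n(\bar\varphi_n(\lambda))=\lambda$, which was established as the main content of the proof of Theorem \ref{restatement}.

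Concretely, I would fix $i\in[n]$ and split into the two cases used there. If $k_{i,i}(\lambda)<m$, then the ceiling in \eqref{kij} is not truncated by the minimum and the numerator is nonpositive enough that $k_{i,i}(\lambda)=\lambda_i-\sum_{\ell=i+1}^n k_{i,\ell}(\lambda)$ exactly (the term $\sum_{\ell=i+1}^i$ is empty), giving $\sum_{j=i}^n k_{i,j}(\lambda)=\lambda_i$. If $k_{i,i}(\lambda)=m$, then Lemma \ref{decrease} (monotonicity $k_{i,j}\geq k_{i,j-1}$) forces $k_{i,\ell}(\lambda)=m$ for every $\ell\geq i$, so $\sum_{j=i}^n k_{i,j}(\lambda)=m(n-i+1)$; and since $k_{i,i}(\lambda)=m$ the ceiling argument in \eqref{kij} gives $\lambda_i-\sum_{\ell=i+1}^n k_{i,\ell}(\lambda)\geq m$, i.e. $\lambda_i\geq m(n-i+1)$, while $\lambda\in\pP^m_n$ gives $\lambda_i\leq m(n-i+1)$, so equality holds. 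In both cases $\lambda_i=\sum_{j=i}^n k_{i,j}(\lambda)$, as required.

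There is essentially no obstacle here: the corollary is a restatement of the identity $\phi_n(\bar\varphi_n(\lambda))=\lambda$ proved en route to Theorem \ref{restatement}, so the cleanest write-up is simply to invoke that proof, or to repeat the two-case computation verbatim. The only point requiring a little care is making sure the empty-sum conventions are handled correctly when $j=i$ (the sum $\sum_{\ell=i+1}^j$ is empty and $\sum_{\ell=j+1}^n=\sum_{\ell=i+1}^n$), and that Lemma \ref{decrease} is applied in the form "$k_{i,i}=m\Rightarrow k_{i,\ell}=m$ for $\ell>i$", which follows by iterating $k_{i,\ell}\geq k_{i,\ell-1}$ together with the upper bound $k_{i,\ell}\leq m$.
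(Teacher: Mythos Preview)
Your proposal is correct and matches the paper's own proof, which simply states that the result follows immediately from Theorem \ref{restatement}. Your recapitulation of the two-case analysis is accurate and merely spells out what the paper leaves implicit; the empty-sum and monotonicity remarks are handled correctly.
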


\begin{proof}
The result follows immediately from Theorem \ref{restatement}.
\end{proof}

In view of Theorem \ref{restatement} and Corollary \ref{cor:kij}, in order 
to prove Theorem \ref{map1}, it suffices to show that $\bar{\varphi}_n(\pP^m)=\rR^m_n$ 
(see Lemma \ref{welldefined} below). For this proof we need the following two lemmas.

\begin{lemma}
\label{subtableau}
Let $\lambda\in \pP^m_n$  with $\bar{\varphi}_n(\lambda)=\{k_{i,j} \mid 1\leq i\leq j\leq n\}$.    
Consider the  partitions 
\begin{enumerate}
 \item $\lambda^{(1)}=(\lambda_2,\lambda_3,\dots,\lambda_n)$.   
 \item $\lambda^{(2)}$, with parts $\lambda^{(2)}_i=\lambda_i-k_{i,n}$, for every $1\leq i\leq n-1$. 
 \item $\lambda^{(3)}$, with parts $\lambda^{(3)}_i=\lambda^{(2)}_i-k_{i,n-1}$, for every $1\leq i\leq n-2$. 
 \item $\lambda^{(4)}$, with parts $\lambda^{(4)}_i=\lambda_i-k_{i,n-1}$, for every $1\leq i\leq n-1$.
\end{enumerate}
Then, 
\begin{itemize} 
\item $\lambda^{(1)},\lambda^{(2)},\lambda^{(4)}\in\pP^m_{n-1}$ and 
\item $\lambda^{(3)}\in \pP^m_{n-2}$. 
\end{itemize}

Moreover, if
\[T^{(h)}=\{k^{(h)}_{i,j} \mid 1\leq i\leq j\leq n-1 \}:=\bar{\varphi}_{n-1}(\lambda^{(h)}), 
\mbox{ for } h\in\{1,2,4\}\]  and 
\[T^{(3)}=\{k^{(3)}_{i,j} \mid 1 \leq i \leq j \leq n-2 \}:=\bar{\varphi}_{n-2}(\lambda^{(3)}),\] 
then the following  relations hold:
\begin{equation}
\label{eq0}
k^{(1)}_{i,j}=k_{i+1,j+1},\ \mbox{for every}\ 1\leq i\leq j\leq n-1, 
\end{equation}
\begin{equation}
\label{eq1}
k^{(2)}_{i,j}=k_{i,j},\ \mbox{for every}\ 1\leq i\leq j\leq n-1, 
\end{equation}
\begin{equation}
\label{eq1b}
k^{(3)}_{i,j}=k_{i,j},\ \mbox{for every}\ 1\leq i\leq j\leq n-2,
\end{equation}
\begin{equation}
\label{eq1c}
k^{(3)}_{i,j}=k^{(4)}_{i,j},\ \mbox{for every}\ 1\leq i\leq j\leq n-2,
\end{equation}
and 
\begin{equation}
\label{eq1d}
k_{i,j}^{(4)}=\begin{cases} k_{i,j+1}, \ \ \mbox{if}\ j=n-1  \\
                     k_{i,j}, \ \ \ \ \ \mbox{if}\ j\leq n-2, 
	      \end{cases}  
\ \mbox{for every}\ 1\leq i\leq j\leq n-1.
\end{equation}
\end{lemma}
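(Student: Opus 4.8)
The plan is to prove all the assertions of Lemma~\ref{subtableau} by a single induction on $n$, mirroring the inductive structure already used in Lemma~\ref{decrease} and in the proof of Theorem~\ref{restatement}. The core idea is that Equation~\eqref{kij} is \emph{recursive}: the value $k_{i,j}(\lambda)$ depends only on the partition $\lambda$ through $\lambda_i$ and through the previously computed entries $k_{i,\ell}(\lambda)$ for $\ell>j$ (same row) and $k_{\ell,j}(\lambda)$ for $\ell>i$ (same column). So to compare $\bar\varphi_n(\lambda)$ with $\bar\varphi_{n-1}$ or $\bar\varphi_{n-2}$ applied to a derived partition, I only need to track how the relevant ``ingredient'' entries transform.

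First I would verify the membership claims $\lambda^{(1)},\lambda^{(2)},\lambda^{(4)}\in\pP^m_{n-1}$ and $\lambda^{(3)}\in\pP^m_{n-2}$. For $\lambda^{(1)}$ this is immediate from the defining inequalities of $\pP^m_n$ after reindexing. For $\lambda^{(2)}$ and $\lambda^{(4)}$ I need $0\le\lambda_i-k_{i,n}\le m(n-i)$ and $0\le\lambda_i-k_{i,n-1}\le m(n-i)$; the lower bounds follow because, when $k_{i,\cdot}<m$, Corollary~\ref{cor:kij}-type reasoning shows $k_{i,n}$ (resp.\ the sum $k_{i,n-1}+k_{i,n}$) is a partial ``rounded average'' bounded by $\lambda_i$, while the $=m$ case is handled by Lemma~\ref{decrease}; the upper bounds follow since $k_{i,n},k_{i,n-1}\ge k_{n,n}=\lambda_n\ge\lceil(\lambda_i-(\text{stuff}))/(\cdots)\rceil$-type estimates push $\lambda_i$ down by at least $m$ when it is maximal. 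For $\lambda^{(3)}$ one combines the two subtractions. The membership of $\lambda^{(3)}$ in $\pP^m_{n-2}$ also uses that after removing the last column of the staircase the relevant parts shrink appropriately. These are exactly the routine inequality manipulations I would not grind through in detail, citing Lemma~\ref{decrease} and Equation~\eqref{kij} as needed.

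Next, the five relations~\eqref{eq0}--\eqref{eq1d}. Relation~\eqref{eq0} is the cleanest: deleting $\lambda_1$ shifts every index up by one and Equation~\eqref{kij} for $\lambda^{(1)}$ is literally Equation~\eqref{kij} for $\lambda$ restricted to rows and columns $\ge 2$; a straightforward induction on the ``anti-diagonal distance'' $j-i$ (the recursion order for \eqref{kij}) gives $k^{(1)}_{i,j}=k_{i+1,j+1}$. For \eqref{eq1}, the partition $\lambda^{(2)}$ is obtained by subtracting off the last-column entries $k_{i,n}$; I claim that running \eqref{kij} on $\lambda^{(2)}$ (which lives on an $(n-1)$-staircase, so there is no column $n$) reproduces exactly the entries $k_{i,j}$ for $j\le n-1$. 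The key observation is that the formula for $k_{i,j}(\lambda)$ with $j\le n-1$ contains the sum $\sum_{\ell=j+1}^n k_{i,\ell}(\lambda)$, which for $\lambda^{(2)}$ becomes $\sum_{\ell=j+1}^{n-1}k^{(2)}_{i,\ell}$; since $\lambda^{(2)}_i=\lambda_i-k_{i,n}$, the ``$\lambda_i$ minus same-row tail'' quantity is unchanged, and by induction on $j-i$ the same-column contributions agree too, so the ceilings coincide. Relations \eqref{eq1b}, \eqref{eq1c}, \eqref{eq1d} are proved the same way, peeling off columns $n$ and $n-1$ in the two possible orders: \eqref{eq1d} describes $\lambda^{(4)}$ (subtract column $n-1$ directly from $\lambda$) where the only entries that ``move'' are those in what was column $n-1$, which slide into the role of column $n$ of the smaller staircase; \eqref{eq1b} and \eqref{eq1c} then say that subtracting column $n$ then column $n-1$, versus the relation to $\lambda^{(4)}$, land on the same $(n-2)$-staircase tableau.

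The main obstacle, and where I would spend the most care, is relation~\eqref{eq1} (and its siblings~\eqref{eq1b}--\eqref{eq1d}): one must check that the $\min\{m,\cdot\}$ truncation and the ceiling function are compatible with the restriction, i.e.\ that no entry that was clamped to $m$ in the big tableau becomes unclamped in the small one, and vice versa. This is precisely where Lemma~\ref{decrease} is essential — it guarantees that along each row the entries are weakly decreasing as the column index decreases, so once $k_{i,n}=m$ the whole row is $m$ and subtracting $k_{i,n}=m$ behaves predictably; and it controls the interplay between the ``$-\sum k_{i,\ell}$'' and ``$+\sum k_{\ell,j}$'' terms in \eqref{kij}. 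I would set up the induction so that the inductive hypothesis supplies both the relations~\eqref{eq0}--\eqref{eq1d} \emph{and} the monotonicity of Lemma~\ref{decrease} for the smaller partitions, then close the loop by the recursion on $j-i$. Once \eqref{eq0}--\eqref{eq1d} are established, Lemma~\ref{welldefined} (and hence Theorem~\ref{map1}) will follow by the evident induction: a Shi tableau for $\lambda$ is built from Shi tableaux for $\lambda^{(1)}$ and $\lambda^{(2)}$ (or $\lambda^{(3)},\lambda^{(4)}$) glued along shared rows/columns, and the Shi conditions~\eqref{Ashi} for the triplets involving column $n$ reduce, via these relations, to conditions already known for the sub-tableaux.
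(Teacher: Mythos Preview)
Your plan is broadly on track and aligns with the paper's own proof, but you are making it considerably harder than necessary in two places.

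\textbf{Membership.} You do not need any induction on $n$, nor any delicate ``rounded average'' estimates or separate handling of the $k_{i,\cdot}=m$ case. Corollary~\ref{cor:kij} already gives $\lambda_i=\sum_{j=i}^n k_{i,j}$, so for instance $\lambda^{(2)}_i=\lambda_i-k_{i,n}=\sum_{j=i}^{n-1}k_{i,j}$, and since each $k_{i,j}\in[m]_0$ this is immediately a nonnegative integer at most $m(n-i)$. The same one-line argument handles $\lambda^{(3)}$ and $\lambda^{(4)}$. This is exactly what the paper does.

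\textbf{Relations \eqref{eq0}--\eqref{eq1c}.} Here you and the paper agree: these follow directly from the recursive definition~\eqref{kij}, by the obvious induction on the order in which the entries are computed. No outer induction on $n$ is needed.

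\textbf{Relation \eqref{eq1d}.} This is the one place where something nontrivial happens, and your sketch (``entries slide into the role of column $n$'') does not by itself yield a proof: if you try to equate the defining formula~\eqref{kij} for $k^{(4)}_{i,n-1}$ with that for $k_{i,n}$ directly, the numerators and denominators do not match, and your worry about the $\min\{m,\cdot\}$ clamp becomes a real obstacle. The paper sidesteps this entirely. For $j\le n-2$ it combines \eqref{eq1b} and \eqref{eq1c}, as you suggest. For $j=n-1$ it uses Corollary~\ref{cor:kij} twice: write $\lambda^{(4)}_i=\sum_{j=i}^{n-1}k^{(4)}_{i,j}$ and $\lambda_i=\sum_{j=i}^{n}k_{i,j}$, substitute $\lambda^{(4)}_i=\lambda_i-k_{i,n-1}$, and cancel using the already-proven \eqref{eq1b}, \eqref{eq1c} to isolate $k^{(4)}_{i,n-1}=k_{i,n}$. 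This avoids ever comparing the ceiling formulas and makes the clamping issue disappear.

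Finally, drop the outer induction on $n$ from your framing: none of the arguments above require it, and the last paragraph of your proposal (about Lemma~\ref{welldefined}) is material for that later lemma, not for this one.
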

\medskip

\noindent Equations (\ref{eq0})-(\ref{eq1d}) explain the relation between each partition 
$\lambda$ with tableau $T$ and  partition $\lambda^{(h)}$ with tableau $T^{(h)}$, for $h=1,2,3,4$. 
In particular:
\begin{enumerate}
 \item  The partition $\lambda^{(1)}$ is obtained from $\lambda$ by omitting 
   its first part, 
   while the tableau $ T^{(1)}=\bar{\varphi}_{n-1}(\lambda^{(1)})$ 
   is obtained from $T$ by removing its top row. 
  \item  The partition $\lambda^{(2)}$ is obtained from $\lambda$ by subtracting  
   (for each $i$) the  $i$-th entry of the leftmost column of $T$ from  $\lambda_i$, 
   while the tableau $T^{(2)}=\bar{\varphi}_{n-1}(\lambda^{(2)})$ is 
   obtained from $T$ by removing its leftmost column. 
 \item  The partition $\lambda^{(3)}$ is obtained from $\lambda$ by subtracting (for each $i$) 
   the  $i$-th entries of the two leftmost columns of $T$ from $\lambda_i$, 
   while the tableau $T^{(3)}=\bar{\varphi}_{n-2}(\lambda^{(3)})$ is 
   obtained from $T$ by removing its  two leftmost columns.  
 \item The partition $\lambda^{(4)}$ is obtained from $ \lambda$ by subtracting (for each $i$) the
  $i$-th entry of the  second leftmost column of $T$ from $\lambda_i$, 
   while  the tableau $T^{(4)}=\bar{\varphi}_{n-1}(\lambda^{(4)})$ is 
   obtained from $T$ by removing its second leftmost column  as well as its
   bottom leftmost box.
\end{enumerate}

Figure \ref{22} illustrates the tableaux $T$ and $ T^{(h)}$ for $h\in{1,2,3,4}$.

\begin{proof}We first show that $\lambda^{(1)},\lambda^{(2)},\lambda^{(4)}\in\pP^m_{n-1}$ and 
$\lambda^{(3)}\in \pP^m_{n-2}$.
The fact that $\lambda^{(1)}\in \pP^m_{n-1}$ is immediate from its definition.  
For $\lambda^{(2)}$ recall that $k_{i,j}\in[m]_0$, for all $1\leq i\leq j\leq n$,  
and also that $\lambda_i=\sum\limits_{j=i}\limits^nk_{i,j}$ for every $i\in[n]$ (Corollary \ref{cor:kij}).
We therefore conclude that $ \lambda^{(2)}_i = \lambda_i-k_{i,n}\leq m(n-i)$, for every $i\in[n]$, 
which shows that $\lambda^{(2)}\in\pP^m_{n-1}$. 
Our claim for $\lambda^{(3)}$ and $\lambda^{(4)} $ is completely analogous.  

Equations (\ref{eq0})-(\ref{eq1c}) follow directly from the definition of $k_{i,j}^{(h)}$. 
We focus on Equation (\ref{eq1d}). We first prove the argument for $j=n-1$. 
Let  $1 \leq i \leq n-1$. Using Equations (\ref{eq1}), (\ref{eq1b}) and (\ref{eq1c})  we get  the following 
implications: 
 \[\begin{aligned} 
   \lambda_i & = k_{i,i}+k_{i,i+1}+\cdots+k_{i,n-1}+k_{i,n}\Rightarrow \\
   \lambda_i-k_{i,n-1} & = k_{i,i}+k_{i,i+1}+\cdots+k_{i,n-1}+k_{i,n}\Rightarrow \\
   \lambda^{(4)}_j & = k_{i,i}+k_{i,i+1}+\cdots+k_{i,n-1}+k_{i,n}\Rightarrow \\
   k^{(4)}_{i,i}+k^{(4)}_{i,i+1}+\cdots+k^{(4)}_{i,n-1} & = k_{i,i}+k_{i,i+1}+\cdots+k_{i,n-1}+k_{i,n}\Rightarrow \\
   k^{(3)}_{i,i}+k^{(3)}_{i,i+1}+\cdots+k^{(3)}_{i,n-2}+k^{(4)}_{i,n-1} & = k_{i,i}+k_{i,i+1}+\cdots+k_{i,n-1}+k_{i,n}\Rightarrow \\
   k_{i,i}+k_{i,i+1}+\cdots+k_{i,n-2}+k^{(4)}_{i,n-1} & = k_{i,i}+k_{i,i+1}+\cdots+k_{i,n-1}+k_{i,n} \Rightarrow \\
   k^{(4)}_{i,n-i} & = k_{i,n}. 
   \end{aligned} \] 
For $j\leq n-2$ the result follows directly from Equations (\ref{eq1b}) and (\ref{eq1c}). Indeed, for every $i\in[n-2]$ 
we have 
$k^{(4)}_{i,j}=k^{(3)}_{i,j}=k_{i,j}$.  
\end{proof}

\begin{figure}
\begin{center}
\begin{tikzpicture}[scale=0.6]
\small
\draw (3,1) rectangle (7,2);
\node at (3.5,1.5){$k_{1,4}$};\node at (4.5,1.5){$k_{1,3}$};\node at (5.5,1.5){$k_{1,2}$};\node at (6.5,1.5){$k_{1,1}$}; 
\draw (3,0) rectangle (6,2);
\node at (3.5,0.5){$k_{2,4}$};\node at (4.5,0.5){$k_{2,3}$};\node at (5.5,0.5){$k_{2,2}$};
\draw (3,-1) rectangle (5,2);
\node at (3.5,-0.5){$k_{3,4}$};\node at (4.5,-0.5){$k_{3,3}$};
\draw (3,-2) rectangle (4,2);
\node at (3.5,-1.5){$k_{4,4}$};
\begin{scope}[xshift=5cm, yshift=0cm]
\draw (3,1) rectangle (7,2);
\draw (3,0) rectangle (6,2);
\node at (3.5,0.5){$k_{2,4}$};\node at (4.5,0.5){$k_{2,3}$};\node at (5.5,0.5){$k_{2,2}$};
\draw (3,-1) rectangle (5,2);
\node at (3.5,-0.5){$k_{3,4}$};\node at (4.5,-0.5){$k_{3,3}$};
\draw (3,-2) rectangle (4,2);
\node at (3.5,-1.5){$k_{4,4}$};
\end{scope}
\begin{scope}[xshift=10cm, yshift=0cm]
\draw (3,1) rectangle (7,2);
\node at (4.5,1.5){$k_{1,3}$};\node at (5.5,1.5){$k_{1,2}$};\node at (6.5,1.5){$k_{1,1}$}; 
\draw (3,0) rectangle (6,2);
\node at (4.5,0.5) {$k_{2,3}$};\node at (5.5,0.5) {$k_{2,2}$};
\draw (3,-1) rectangle (5,2);
\node at (4.5,-0.5) {$k_{3,3}$};
\draw (3,-2) rectangle (4,2);
\end{scope}
\begin{scope}[xshift=15cm, yshift=0cm]
\draw (3,1) rectangle (7,2);
\node at (5.5,1.5){$k_{1,2}$};\node at (6.5,1.5){$k_{1,1}$}; 
\draw (3,0) rectangle (6,2);
\node at (5.5,0.5){$k_{2,2}$};
\draw (3,-1) rectangle (5,2);
\draw (3,-2) rectangle (4,2);
\end{scope}
\begin{scope}[xshift=20cm, yshift=0cm]
\draw (3,1) rectangle (7,2);
\node at (3.5,1.5) {$k_{1,4}$}; 
\node at (5.5,1.5) {$k_{1,2}$};\node at (6.5,1.5){ $k_{1,1}$}; 
\draw (3,0) rectangle (6,2);
\node at (3.5,0.5) {$k_{2,4}$};
\node at (5.5,0.5) {$k_{2,2}$};
\draw (3,-1) rectangle (5,2);
\node at (3.5,-0.5) {$k_{3,4}$};
\draw (3,-2) rectangle (4,2);
\end{scope}

\begin{scope}[yshift=-5.5cm]\small
\draw (3,1) rectangle (7,2);
\node at (3.5,1.5){$k_{1,4}$};\node at (4.5,1.5){$k_{1,3}$};\node at (5.5,1.5){$k_{1,2}$};\node at (6.5,1.5){$k_{1,1}$}; 
\draw (3,0) rectangle (6,2);
\node at (3.5,0.5){$k_{2,4}$};\node at (4.5,0.5){$k_{2,3}$};\node at (5.5,0.5){$k_{2,2}$};
\draw (3,-1) rectangle (5,2);
\node at (3.5,-0.5){$k_{3,4}$};\node at (4.5,-0.5){$k_{3,3}$};
\draw (3,-2) rectangle (4,2);
\node at (3.5,-1.5){$k_{4,4}$};
\begin{scope}[xshift=5cm, yshift=0cm]
\draw (4,1) rectangle (7,2);
\node at (4.5,1.5){$k^{(1)}_{1,3}$};\node at (5.5,1.5){$k^{(1)}_{1,2}$};\node at (6.5,1.5){$k^{(1)}_{1,1}$}; 
\draw (4,0) rectangle (6,2);
\node at (4.5,0.5){$k^{(1)}_{2,3}$};\node at (5.5,0.5){$k^{(1)}_{2,2}$};
\draw (4,-1) rectangle (5,2);
\node at (4.5,-0.5){$k^{(1)}_{3,3}$};
\end{scope}
\begin{scope}[xshift=10cm, yshift=0cm]
\draw (4,1) rectangle (7,2);
\node at (4.5,1.5){$k^{(2)}_{1,3}$};\node at (5.5,1.5){$k^{(2)}_{1,2}$};\node at (6.5,1.5){$k^{(2)}_{1,1}$}; 
\draw (4,0) rectangle (6,2);
\node at (4.5,0.5){$k^{(2)}_{2,3}$};\node at (5.5,0.5){$k^{(2)}_{2,2}$};
\draw (4,-1) rectangle (5,2);
\node at (4.5,-0.5){$k^{(2)}_{3,3}$};
\end{scope}
\begin{scope}[xshift=15cm, yshift=0cm]
\draw (5,1) rectangle (7,2);
\node at (5.5,1.5) {$k^{(3)}_{1,2}$};\node at (6.5,1.5){$k^{(3)}_{1,1}$}; 
\draw (5,0) rectangle (6,2);
\node at (5.5,0.5) {$k^{(3)}_{2,2}$};
\end{scope}
\begin{scope}[xshift=20cm, yshift=0cm]
\draw (4,1) rectangle (7,2);
\node at (4.5,1.5){$k^{(4)}_{1,3}$};\node at (5.5,1.5){$k^{(4)}_{1,2}$};\node at (6.5,1.5){$k^{(4)}_{1,1}$}; 
\draw (4,0) rectangle (6,2);
\node at (4.5,0.5){$k^{(4)}_{2,3}$};\node at (5.5,0.5){$k^{(4)}_{2,2}$};
\draw (4,-1) rectangle (5,2);
\node at (4.5,-0.5){$k^{(4)}_{3,3}$};
\end{scope}
\end{scope}
\end{tikzpicture}
\caption{The tableau $T$ is displayed on the left 
followed by the tableaux of the partitions $\lambda^{(h)}$, for $h=1,2,3,4$, respectively.}  
\label{22}
\end{center}
\end{figure}

Lemma \ref{decrease} states that the entries $k_{i,j}$ from Equation (\ref{kij}) decrease from left to right along the rows. 
The following lemma shows that an analogous property holds for the columns. More precisely, we show that 
the entries $k_{i,j}$ decrease from top to bottom.
\begin{lemma}
\label{decrease2}
  Let $\lambda\in\pP^m_n$ and $k_{i,j}(\lambda)=k_{i,j}$ defined as in Equation (\ref{kij}).  
  Then, $k_{i,j}\geq k_{i+1,j}$ for all $1\leq i< j\leq n$. 
\end{lemma}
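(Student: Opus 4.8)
The plan is to mirror the structure of the proof of Lemma \ref{decrease}, since the columns play a symmetric role to the rows in Equation \eqref{kij}: there $\lambda_i$ is corrected by subtracting the partial row-sum $\sum_{\ell>j}k_{i,\ell}$ and adding the partial column-sum $\sum_{i<\ell\le j}k_{\ell,j}$, and the two correction terms enter with opposite signs. I would induct on $n$, but the cleaner route is to induct on the hook length $j-i$, i.e. prove $k_{i,j}\ge k_{i+1,j}$ for a fixed $j$ by descending induction on $i$, using Lemma \ref{decrease} (rows decrease left to right) and Lemma \ref{subtableau}, which lets me pass to the subtableaux $T^{(1)}$ (delete top row) and $T^{(2)}$ (delete leftmost column) via Equations \eqref{eq0} and \eqref{eq1}. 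In fact, $k_{i+1,j}=k^{(1)}_{i,j-1}$ by \eqref{eq0}, and $k_{i,j}$ together with the entries below and to its right all reappear unchanged in either $T^{(1)}$ or $T^{(2)}$, so the induction hypothesis for the smaller tableaux applies directly.

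\medskip

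The base cases are the boxes with $j=i+1$, where the claim is $k_{i,i+1}\ge k_{i+1,i+1}$; here $k_{i+1,i+1}=\lambda_{i+1}$ (as noted in the $n=2$ computation of Lemma \ref{decrease}, since the defining recursion for a diagonal box has empty correction sums), so the inequality reads $k_{i,i+1}\ge\lambda_{i+1}$. If $k_{i,i+1}=m$ this is immediate because $\lambda_{i+1}\le mn\le$ — no: more carefully, $\lambda_{i+1}\le m(n-i)$, and for the subtableau obtained by restricting to rows/columns $i,i+1$ one has $\lambda_{i+1}\le 2m$ in the relevant two-row block, so $k_{i,i+1}=m\ge\lambda_{i+1}$ needs the two-row normalization; the honest way is to invoke Lemma \ref{subtableau} to reduce to a size-$2$ tableau and reuse the $n=2$ argument verbatim. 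If $k_{i,i+1}<m$ then $2k_{i,i+1}=2\lceil(\lambda_i+k_{i+1,i+1})/2\rceil\ge\lambda_i+k_{i+1,i+1}=\lambda_i+\lambda_{i+1}\ge\lambda_{i+1}\ge k_{i+1,i+1}$ — wait, I need $k_{i,i+1}\ge k_{i+1,i+1}=\lambda_{i+1}$, and $2k_{i,i+1}\ge\lambda_i+\lambda_{i+1}\ge 2\lambda_{i+1}$ would require $\lambda_i\ge\lambda_{i+1}$, which holds since $\lambda$ is a partition. So $k_{i,i+1}\ge\lambda_{i+1}=k_{i+1,i+1}$, as desired.

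\medskip

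For the inductive step with $j-i\ge 2$, assume $k_{i,i+1}=m$ is handled (then Lemma \ref{decrease} forces $k_{i,\ell}=m$ for all $\ell\ge i+1$, in particular $k_{i,j}=m\ge k_{i+1,j}$). Otherwise $k_{i,j}<m$, so $k_{i,j}=\lceil N/(j-i+1)\rceil$ where $N=\lambda_i-\sum_{\ell>j}k_{i,\ell}+\sum_{i<\ell\le j}k_{\ell,j}$. I would compare this with $k_{i+1,j}=\min\{m,\lceil N'/(j-i)\rceil\}$ where $N'=\lambda_{i+1}-\sum_{\ell>j}k_{i+1,\ell}+\sum_{i+1<\ell\le j}k_{\ell,j}$, and show $N/(j-i+1)\ge N'/(j-i)$ by clearing denominators. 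Cross-multiplying, this becomes $(j-i)N-(j-i+1)N'\ge 0$; expanding, the column-sum $\sum_{i+1<\ell\le j}k_{\ell,j}$ contributes with a favorable sign and the terms $\lambda_i-\lambda_{i+1}\ge 0$, $k_{i+1,j}-\sum_{\ell>j}(k_{i,\ell}-k_{i+1,\ell})$ need to be controlled: here I use $k_{i,\ell}\ge k_{i+1,\ell}$ from the induction hypothesis (applied to the larger index $\ell>j$, i.e. smaller hook length), Lemma \ref{decrease} for the row monotonicity, and $k_{i+1,j}\ge 0$. After the algebra the remaining terms are a sum of nonnegative quantities, exactly as in the chain of equivalences at the end of Lemma \ref{decrease}'s proof. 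Then $\lceil N/(j-i+1)\rceil\ge\lceil N'/(j-i)\rceil$, and since $k_{i,j}<m$ we get $k_{i,j}\ge\lceil N'/(j-i)\rceil\ge\min\{m,\lceil N'/(j-i)\rceil\}=k_{i+1,j}$. The main obstacle I anticipate is bookkeeping the two correction sums so that the cross-multiplied inequality collapses to a manifestly nonnegative expression; the sign pattern is opposite to the row case, so I must be careful that the induction hypothesis $k_{i,\ell}\ge k_{i+1,\ell}$ is invoked only for indices $\ell$ with strictly smaller hook length than $(i,j)$, which it is, since $\ell>j$ gives $\ell-i>j-i$ but $\ell-(i+1)$ versus $j-i$ — I would instead set up the outer induction on $n$ (peeling the top row and leftmost column as in Lemma \ref{subtableau}) so that all invoked instances genuinely live in a strictly smaller tableau, avoiding any circularity.
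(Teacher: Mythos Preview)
Your proposal eventually lands on the right idea, but the path there contains genuine errors, and the correct approach is never actually carried out.

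\textbf{The direct computation is wrong.} In your base case you write $k_{i+1,i+1}=\lambda_{i+1}$ ``since the defining recursion for a diagonal box has empty correction sums.'' That is false: Equation~\eqref{kij} gives
\[
k_{i+1,i+1}=\min\Bigl\{m,\ \lambda_{i+1}-\sum_{\ell=i+2}^{n}k_{i+1,\ell}\Bigr\},
\]
and the row correction $\sum_{\ell>i+1}k_{i+1,\ell}$ is nonempty unless $i+1=n$. The same omission appears in your formula $k_{i,i+1}=\lceil(\lambda_i+k_{i+1,i+1})/2\rceil$; the actual expression has $\lambda_i-\sum_{\ell>i+1}k_{i,\ell}$ in place of $\lambda_i$. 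So the chain $2k_{i,i+1}\ge\lambda_i+\lambda_{i+1}\ge 2\lambda_{i+1}$ does not hold as written. Your patch (``reduce to a size-$2$ tableau'') is the right instinct but you never say which subtableau effects this reduction.

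\textbf{The hook-length induction is circular.} You correctly diagnose this yourself: to control $\sum_{\ell>j}(k_{i,\ell}-k_{i+1,\ell})$ you would need $k_{i,\ell}\ge k_{i+1,\ell}$ for $\ell>j$, i.e.\ for \emph{larger} hook length $\ell-i>j-i$, which is not available.

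\textbf{What actually works.} The paper's proof is exactly your final sentence, executed: induct on $n$, and in the inductive step invoke Lemma~\ref{subtableau} so that every pair $(k_{i,j},k_{i+1,j})$ reappears as a pair $(k^{(h)}_{i',j'},k^{(h)}_{i'+1,j'})$ in some $T^{(h)}$ of strictly smaller size. Concretely, $T^{(1)}$ handles all $i\ge 2$ via \eqref{eq0}; $T^{(2)}$ handles $i=1,\ j\le n-1$ via \eqref{eq1}; and $T^{(4)}$ handles $i=1,\ j=n$ via \eqref{eq1d}, since $k^{(4)}_{1,n-1}=k_{1,n}$ and $k^{(4)}_{2,n-1}=k_{2,n}$. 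The only computation needed is the base case $n=2$: if $k_{1,2}<m$ then $k_{1,2}=\lceil(\lambda_1+k_{2,2})/2\rceil\ge\lceil(\lambda_2+k_{2,2})/2\rceil=k_{2,2}$ using $\lambda_1\ge\lambda_2=k_{2,2}$. No cross-multiplication or hook-length bookkeeping is required.
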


\begin{proof}
  We proceed again by induction on $n$. 
  Let $n=2$. We will show that $k_{1,2}\geq k_{2,2}$. 
  We may assume that $k_{1,2}<m$ since otherwise the result is trivial. 
  Then, $k_{1,2}=\left\lceil\frac{\lambda_1+k_{2,2}}{2}\right\rceil\geq\left\lceil\frac{k_{1,1}+k_{1,1}}{2}\right\rceil=k_{1,1}$. 
  By induction and using the tableaux $T^{(h)}$ of Lemma \ref{subtableau} the result is trivial. 
\end{proof}

\begin{lemma}
\label{welldefined}
Let $\bar{\varphi}_n$ be the map of Theorem \ref{restatement}. Then $\bar{\varphi}_n(\pP^m_n)=\rR^m_n$.
\end{lemma}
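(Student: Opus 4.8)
The plan is to prove both inclusions $\bar{\varphi}_n(\pP^m_n)\subseteq\rR^m_n$ and $\rR^m_n\subseteq\bar{\varphi}_n(\pP^m_n)$, with the first inclusion doing the real work and the second following by a counting argument. For the inclusion $\bar{\varphi}_n(\pP^m_n)\subseteq\rR^m_n$, I would fix $\lambda\in\pP^m_n$, write $T=\bar{\varphi}_n(\lambda)=\{k_{i,j}\}$, and verify that every triplet $\{k_{i,j},k_{i,\ell},k_{\ell+1,j}\}$ satisfies the Shi conditions \eqref{Ashi}; by the Proposition quoted in Section~\ref{shi}, this is exactly what it means for $T$ to be a type-$A$ Shi tableau, hence an element of $\rR^m_n$. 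The argument should go by induction on $n$, the base case $n=2$ being the single triplet $\{k_{1,2},k_{1,1},k_{2,2}\}$, which is already essentially checked inside the proofs of Lemmas~\ref{decrease} and \ref{decrease2} (there we saw $\lambda_2=k_{2,2}$ and $2k_{1,2}\geq\lambda_1+k_{2,2}$, which pins down $k_{1,2}=\min\{m,\lceil(k_{1,1}+k_{2,2})/2\rceil\}$ once one rewrites $\lambda_1=k_{1,1}+k_{1,2}$).

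For the inductive step I would exploit the four sub-tableaux $T^{(1)},\dots,T^{(4)}$ of Lemma~\ref{subtableau}. The key observation is that a triplet $\{k_{i,j},k_{i,\ell},k_{\ell+1,j}\}$ in $T$ lives entirely inside one of these smaller tableaux unless it involves the top row or the leftmost column or the bottom-left corner box. Concretely: triplets avoiding row~$1$ are triplets of $T^{(1)}=\bar{\varphi}_{n-1}(\lambda^{(1)})$ by \eqref{eq0}; triplets avoiding the leftmost column (i.e.\ with $j\le n-1$) are triplets of $T^{(2)}=\bar{\varphi}_{n-1}(\lambda^{(2)})$ by \eqref{eq1}; and so the induction hypothesis handles all of them. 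What remains are the triplets with $i=1$ \emph{and} $j=n$, i.e.\ $\{k_{1,n},k_{1,\ell},k_{\ell+1,n}\}$ for $1\le\ell<n$. These must be verified by hand from the defining formula \eqref{kij}, using Corollary~\ref{cor:kij} ($\lambda_1=\sum_{j\ge 1}k_{1,j}$) together with the monotonicity Lemmas~\ref{decrease} and \ref{decrease2} and the already-proved Shi conditions for the smaller tableaux (which give relations among the $k_{\ell+1,n}$ and among the $k_{1,\ell}$). This is the step I expect to be the main obstacle: one has to show that the ceiling in \eqref{kij} defining $k_{1,n}$ equals $\min\{m,k_{1,\ell}+k_{\ell+1,n}+\delta\}$ for a suitable $\delta\in\{0,1\}$, which amounts to pinning down $k_{1,n}$ between $k_{1,\ell}+k_{\ell+1,n}$ and $k_{1,\ell}+k_{\ell+1,n}+1$ (or showing it saturates at $m$), and this requires carefully combining the hook/triplet identities with the averaging in the ceiling. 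The relations \eqref{eq1d} connecting $k^{(4)}_{i,n-1}$ to $k_{i,n}$, and the analogous statements when one removes columns, are exactly what is needed to reduce the general $\ell$ to the boundary cases $\ell=1$ and $\ell=n-1$.

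Once $\bar{\varphi}_n(\pP^m_n)\subseteq\rR^m_n$ is established, I would finish as follows. By Theorem~\ref{restatement} the map $\bar{\varphi}_n$ is injective, so $|\bar{\varphi}_n(\pP^m_n)|=|\pP^m_n|$. On the other hand, $|\pP^m_n|=\frac{1}{m(n+1)+1}\binom{(m+1)(n+1)}{n+1}$, which by the Fuss--Catalan count recalled in the introduction equals $N(A_n,m)=|\rR^m_n|$. Hence a subset of $\rR^m_n$ of full cardinality must be all of $\rR^m_n$, giving the reverse inclusion and the equality $\bar{\varphi}_n(\pP^m_n)=\rR^m_n$. (Alternatively, and more in the spirit of a ``combinatorial proof,'' one could show directly that $\varphi_n=\phi_n|_{\rR^m_n}$ followed by $\bar{\varphi}_n$ are mutually inverse: Corollary~\ref{cor:kij} already gives $\varphi_n\circ\bar{\varphi}_n=\mathrm{id}$ on $\pP^m_n$, and an argument that $\bar{\varphi}_n\circ\varphi_n=\mathrm{id}$ on $\rR^m_n$ would follow from the uniqueness built into the Shi conditions \eqref{Ashi} — namely, that the entries $k_{i,j}$ of a Shi tableau are determined from the row sums $\lambda_i$ by the same recursion \eqref{kij}. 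I would present the cardinality argument as the main line since it is shortest, and remark on the direct inverse as it is what actually makes $\omega_n$ explicit.)
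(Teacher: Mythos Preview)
Your plan is essentially the paper's own argument: induction on $n$, with the sub-tableaux of Lemma~\ref{subtableau} disposing of almost all triplets and a direct computation for what is left. Two points deserve sharpening.

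First, the role of $T^{(4)}$ is stronger than you suggest. Combining $T^{(1)}$ (which handles all triplets with $i\ge 2$ via \eqref{eq0}), $T^{(2)}$ (all triplets with $j\le n-1$ via \eqref{eq1}), and $T^{(4)}$ (all triplets $(i,\ell,n)$ with $\ell\le n-2$, since by \eqref{eq1d} a triplet $(i,\ell,n-1)$ of $T^{(4)}$ reads as $\{k_{i,n},k_{i,\ell},k_{\ell+1,n}\}$), the induction hypothesis leaves exactly the \emph{single} triplet $(1,n-1,n)$, not a family of boundary cases. The ``by hand'' step is thus only $\{k_{1,n},k_{1,n-1},k_{n,n}\}$, and the paper carries this out by summing the already-established Shi relations $k_{1,n-1}=k_{1,\ell}+k_{\ell+1,n-1}+\delta_\ell$ and $k_{\ell,n}=k_{\ell,n-1}+k_{n,n}+\delta'_\ell$ over $\ell$ and comparing with the averaging formula \eqref{kij} for $k_{1,n}$.

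Second, for the reverse inclusion the paper does \emph{not} invoke the Fuss--Catalan count; since the whole point of the section is a self-contained combinatorial proof of Theorem~\ref{all partitions}, appealing to $|\pP^m_n|=|\rR^m_n|$ would be circular in spirit. Instead one argues directly: given a Shi tableau $T$, set $\lambda=\phi_n(T)$ and verify $\bar{\varphi}_n(\lambda)=T$. This amounts to showing that the entries of any Shi tableau satisfy the recursion \eqref{kij}, which follows from the Shi conditions themselves (summing $k_{i,j}=k_{i,\ell}+k_{\ell+1,j}+\delta_\ell$ over $\ell$ gives $(j-i+1)k_{i,j}=\lambda_i-\sum_{\ell>j}k_{i,\ell}+\sum_{\ell>i}k_{\ell,j}+\sum\delta_\ell$ with $\sum\delta_\ell\in[0,j-i]$, whence the ceiling). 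Your alternative remark already points in this direction; make it the main line rather than the counting argument.
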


\begin{proof}
We will first prove that $\bar{\varphi}_n(\pP^m_n)\subseteq \rR^m_n$. 
Let $\lambda \in \pP^m_n$. We will show that 
$\bar{\varphi}_n(\lambda)=\{ k_{i,j}(\lambda),\ 1 \leq i \leq j \leq n \}$ is an element of $\rR^m_n$.  
Equivalently,  we will show that for every $1 \leq i \leq j \leq n$, the numbers  $k_{i,j}$ 
satisfy the  Shi conditions.  

We proceed by induction on $n$. The result is trivial for $n=1$. We treat now the case $n=2$.   
Suppose first that $k_{1,2}<m$. Then $k_{1,2}=\frac{\lambda_1+k_{2,2}+\upsilon}{2}$, 
where $\upsilon\in\{0,1\}$. 
Since $\lambda_1=k_{1,2}+k_{2,2}$, we conclude that $k_{1,2}=k_{1,1}+k_{2,2}+\upsilon$ and we are done. 
Suppose now that $k_{1,2}=m$. Then $m\leq \frac{\lambda_1+k_{2,2}+\upsilon}{2}$, where $\upsilon\in\{0,1\}$. 
As before we conclude that $k_{1,1}+k_{2,2}\geq m-\upsilon\geq m-1$. Therefore the Shi condition hold for this case as well. 

We assume that  our claim holds for all $\ell <n$ and we will show that it holds for $n$ as well.
Let $\lambda=(\lambda_1,\ldots, \lambda_n)\in\pP^m_n $ and 
consider the partitions $\lambda^{(1)},\lambda^{(2)},\lambda^{(3)}$ and $\lambda^{(4)}$  
defined in Lemma \ref{subtableau}. 
Let $T^{(h)}=\{k^{(h)}_{i,j} \mid 1\leq i\leq j\leq n-1 \}:=\bar{\varphi}_{n-1}(\lambda^{(h)})$ 
for $h\in\{1,2,4\}$ and 
$T^{(3)}= \{ k^{(3)}_{i,j} \mid  1 \leq i \leq j \leq n-2 \}:=\bar{\varphi}_{n-2}(\lambda^{(3)})$.   
From the induction hypothesis, we have that $T^{(h)}\in\rR^m_{n-1}$ for all $h\in\{1,2,4\}$ 
and $T^{(3)} \in \rR^m_{n-2}$. 
Thus, from Equations (\ref{eq1}), (\ref{eq1b}), (\ref{eq1d}) and  the induction  hypothesis  we have that  
that the Shi condition (\ref{Ashi}) holds for each triplet 
$\{k_{i,j}, k_{i,l}, k_{l+1,j}\}$ with $(i,l,j)\neq(1,n-1,n)$. 

Hence, it remains to  show 
that the Shi condition (\ref{Ashi}) holds for the triplet $\{k_{1,n},k_{1,n-1},k_{n,n}\}$ as well. 
We distinguish two cases: (1) $k_{1,n}<m$ and (2) $k_{1,n}=m$. 

(1) Let $k_{1,n}<m$. Lemma \ref{decrease}~(i) implies that $k_{1,n-1}<m$. 
In this case condition (\ref{Ashi}) reduces to showing that  $k_{1,n}-k_{n,n}-k_{1,n-1}\in\{0,1\}$. 
By the  definition of $k_{1,n}$, we have that $k_{1,n}=\frac{\lambda_1+\sum\limits_{l=2}\limits^nk_{l,n}+\upsilon}{n}$, 
where $\upsilon\in[n-1]_{\geq 0}$. 
Moreover, $\lambda_1= \sum\limits_{\ell=1}\limits^nk_{1,\ell}$. We  therefore have: 
\begin{align} 
 nk_{1,n}=\lambda_1+\sum\limits_{l=2}\limits^nk_{l,n}+\upsilon& =
\sum\limits_{\ell=1}\limits^nk_{1,\ell}+\sum\limits_{l=2}\limits^nk_{l,n}+\upsilon=
k_{1,n}+\sum\limits_{\ell=1}\limits^{n-1}k_{1,\ell}+\sum\limits_{\ell=2}\limits^nk_{\ell,n}+\upsilon \leftrightarrow \notag\\
(n-1)k_{1,n}&=\sum\limits_{\ell=1}\limits^{n-1}k_{1,\ell}+\sum\limits_{\ell=2}\limits^nk_{\ell,n}+\upsilon.
 \label{eq3}
\end{align}
   
We recall now that $k_{1,n-1}<m$, thus for every $1\leq \ell\leq n-2$ we have that 
$k_{1,n-1}=k_{1,\ell}+k_{\ell+1,n-1}+\delta_{\ell}$, where $\delta_{\ell}\in\{0,1\}$. 
Hence,
\begin{align} 
\sum\limits_{\ell=1}\limits^{n-2}k_{1,n-1} & = 
\sum\limits_{\ell=1}\limits^{n-2}k_{1,\ell}+
\sum\limits_{\ell=1}\limits^{n-2}k_{\ell,n-1}+
\sum\limits_{\ell=1}\limits^{n-2}\delta_{\ell}\Leftrightarrow\\
\sum\limits_{\ell=1}\limits^{n-2}k_{1,\ell} & = 
\sum\limits_{\ell=1}\limits^{n-2}k_{1,n-1}- 
\sum\limits_{\ell=1}\limits^{n-2}k_{\ell,n-1}-
\sum\limits_{\ell=1}\limits^{n-2}\delta_{\ell}\Leftrightarrow
  \notag \\
\sum\limits_{\ell=1}\limits^{n-2}k_{1,\ell} & = 
(n-2)k_{1,n-1}- 
\sum\limits_{\ell=1}\limits^{n-2}k_{\ell,n-1}-
\sum\limits_{\ell=1}\limits^{n-2}\delta_{\ell}.
\label{eq4} 
\end{align}

On the other hand, for every $2\leq \ell \leq n-1$, the entry 
$k_{\ell,n}$ satisfies the Shi condition, therefore, 
$k_{\ell,n}=k_{\ell,n-1}+k_{n,n}+\delta'_{\ell}$, where $\delta'_{\ell}\in\{0,1\}$. 
Thus,
  
\begin{align} 
\sum\limits_{\ell=2}\limits^{n-1}k_{\ell,n} & = 
\sum\limits_{\ell=2}\limits^{n-1}k_{\ell,n-1}+
\sum\limits_{\ell=2}\limits^{n-1}k_{n,n}+
\sum\limits_{\ell=2}\limits^{n-1}\delta'_{\ell}\Leftrightarrow \notag   \\  
\sum\limits_{\ell=2}\limits^{n-1}k_{\ell,n} & = 
\sum\limits_{\ell=2}\limits^{n-1}k_{\ell,n-1}+
(n-2)k_{n,n}+
\sum\limits_{\ell=2}\limits^{n-1}\delta'_{\ell}.   
\label{eq5}  
\end{align}

Substituting  Equations (\ref{eq4}) and (\ref{eq5}) in  Equation (\ref{eq3}) we get: 
\begin{align}
 (n-1)k_{1,n}&=(n-1)(k_{1,n-1}+k_{n,n})- 
\sum\limits_{\ell=1}\limits^{n-2}k_{\ell,n-1}-
\sum\limits_{\ell=1}\limits^{n-2}\delta_{\ell}+
\sum\limits_{\ell=2}\limits^{n-1}k_{\ell,n-1}+
\sum\limits_{\ell=2}\limits^{n-1}\delta'_{\ell}+\upsilon. 
\notag
\end{align} 
Hence,
\[k_{1,n}=k_{1,n-1}+k_{n,n}+\kappa,\]
where $\kappa=\frac{\sum\limits_{\ell=2}\limits^{n-1}\delta'_{\ell}-\sum\limits_{\ell=1}\limits^{n-2}\delta_{\ell}+\upsilon}{n-1}$.
Since $k_{1,n}, k_{1,2},k_{2,n} \in \mathbb{N}$, it follows that $\kappa \in \mathbb{Z}$.  
Moreover, from the fact that  $\delta_{\ell},\delta'_{\ell}\in \{0,1\}$ and $\upsilon\in[n-1]_{\geq 0}$, one can easily see  
that $\kappa\in\{0,1\}$, which completes our claim for $k_{1,n}< m$.  

(2) Let $k_{1,n}=m$. In this case,  showing that the triplet $\{k_{1,n},k_{1,n-1},k_{n,n}\}$  
satisfies the Shi condition, reduces to showing that 
$k_{1,n-1}+k_{n,n} \geq m-1$. 
Suppose that $k_{1,\ell}=m$ for some $1\leq\ell\leq n-1$. 
Then, Lemma \ref{decrease} implies that $k_{1,n-1}=m$ as well, thus the result follows. 
We argue similarly if $k_{\ell,n}=m$ for some $2\leq \ell\leq n$ (in this case we apply Lemma \ref{decrease2}). 
Finally, suppose that $k_{1,j},k_{i,n}<m$ for every $1\leq i\leq n-1$ and $2\leq j\leq n$.
Then, the Shi conditions for each triplet are always as in the upper case of (\ref{Ashi}). 
We can therefore follow verbatim the computation done in the case where $k_{1,n}<m$.   
In particular, the  definition of $k_{1,n}$ implies that  
$k_{1,n}=m\leq \left\lceil\frac{\lambda_1+\sum\limits_{l=2}\limits^nk_{\ell,n}}{n}\right\rceil$
which, after the computations, gives $m=k_{1,n}\leq k_{1,n-1}+k_{n,n}+\kappa$, where $\kappa\in\{0,1\}$. 
Therefore, $\bar{\varphi}_n(\pP^m_n)\subseteq\rR^m_n$.

We will prove now that $\rR^m_n\subseteq \bar{\varphi}_n(\pP^m_n)$. 
Let $T=\{k_{i,j} \mid  1 \leq i\leq j\leq n\}\in \rR^m_n$ and consider the 
partition $\lambda$ whose parts are the sums of the entries of each row of $T$. 
Clearly then $\lambda\in \pP^m_n$, and from the proof of Theorem 
\ref{restatement} it follows that  $T=\bar{\varphi}_n(\lambda)$, which implies that 
$T\in \bar{\varphi}_n(\pP^m_n)$. 
This completes the proof of the lemma.  
\end{proof}

\begin{proof}[Proof of Theorem \ref{map1}]
 The result follows immediately from Theorem \ref{restatement}, Corollary \ref{cor:kij} and Lemma \ref{welldefined}.
\end{proof}

\begin{proof}[Proof of Theorem \ref{all partitions}] 
From Theorem \ref{map1} we have that 
$\varphi_n:\rR^m_n\rightarrow \pP^m_n$ is a bijection. 
Finally, to show that $\varphi_n$ has the property in the statement of the theorem, 
notice that $\lambda_i=m(n-i+1)$ if and only if  
$k_{i,i}=k_{i,i+1}=\cdots=k_{i,n}=m$, which in turn implies that $H_{\alpha_i,m}$ is a separating wall. 
\end{proof}

\section{From facets of $\dD^m_n$ to partitions in $\pP^m_n$}
\label{facets to partitions}
In this section we prove Theorem \ref{snakedisA} combinatorially.
In particular, we give a bijection that characterizes the facets 
of $\dD^m_n$ with respect to the negative simple roots they contain, 
in terms of partitions in $P^m_n$. 

\begin{definition}
Let $P$ be a polygon with $m(n+1)+2$ vertices labeled by distinct integers 
of the set $[m(n+1)+1]_{\geq 0}$ and let $d$ be an $m$-diagonal of $P$. 
The smallest label between the endpoints of $d$ is called \emph{initial} point of $d$. 
\end{definition}

Consider now an $m(n+1)+2$-gon $P$ with its vertices labeled by the integers 
in $[m(n+1)+1]_{\geq 0}$ in counterclockwise order. Let also $D$ be a maximal $m$-dissection 
of $P$ and let 
$\{t_1,t_2,\ldots,t_n\}$ be the multiset consisting of the initial points of the diagonals in
$D$. Without loss of generality, we may assume that $t_1\geq t_2\geq \cdots
\geq t_n$. One can show that $t_i \leq m(n-i+1)$ for all $1 \leq i \leq n$, therefore, 
$(t_1,t_2,\ldots,t_n)$ is an element of $\pP^m_n$. 
We consider the map $\psi_n':\dD^m_n\to \pP^m_n$, which sends every dissection 
$D\in\dD^m_n$ to the partition defined by the initial points of $D$, and
note that it is a bijection. 
Even though the map $\psi_n'$ describes an obvious way to associate
integer partitions to polygon dissections, it can not be adopted in
our case, since the composition $\varphi_n^{-1}\circ \psi_n'$ does not
preserve the property of Theorem~\ref{propA}. Indeed, let
$D$ be the dissection of an $(m(n+1)+2)$-gon with diagonals
$\{im,mn+1\}$ where $1\leq i\leq n$. These diagonals, which are all
incident to $mn+1,$ do not form a ``snake'' when $ n>2$. Thus, the
dissection $D$ does not contain all the negative simple roots. Now,
applying the above bijection we have $\psi_n'(D)=(mn,m(n-1),\dots,m)$. On
the other hand, from Theorem~\ref{all partitions}, the partition
$(mn,m(n-1),\dots,m)$ corresponds to the Shi tableau with all entries
equal to $m$. This tableau represents the region having separating walls all
hyperplanes of the form $H_{{\alpha},m}$, where $\alpha$ is a simple root,
which implies that the bijection $\varphi_n^{-1}\circ \psi_n'$ does not
preserve the property stated in Theorem~\ref{propA}. 

Although the map $\psi_n'$ does not give us the required characterization,
it constitutes the key-idea for the Proof of Theorem~\ref{snakedisA}.
Based on the bijection $\psi_n'$, we relabel the vertices of $P$ so that
the property of Theorem~\ref{propA} is preserved. The
relabeling will give us an equivalent way to describe the negative simple
roots, which will be consistent with the property we want to preserve.
In the remainder of the paper, when we write that a vertex of a
polygon $P$ lies \emph{on the right} (resp. \emph{on the left}) of
some other vertex of $P$, we mean on the right (resp. on the left) 
with respect to the center of the polygon.

\begin{definition} 
\label{alternA}
Let $P$ be a polygon with $m(n+1)+2$ vertices labeled from $0$ to
$m(n+1)+1$ as follows: Fix a vertex labeled with 0.
The vertices on the right of $0$ are labeled with the
numbers $ k \in[m(m+1)+1]$ for which $ \lfloor
\frac{k}{m} \rfloor$ is even and so that the labels increase in
the counterclockwise direction from vertex 0. Similarly, the
vertices on the left of $0$ are labeled with those 
$k \in[m(m+1)+1]$ for which $ \lfloor \frac{k}{m} \rfloor$
is odd and so that the labels increase in the clockwise
direction.  We call this the \emph{alternating type-$A$ labeling.}
\end{definition}
 
\subsubsection{Representation of negative simple roots for type $A$} 
Let $P$ be an $(m(n+1)+2)$-gon, fix some vertex $0$ and consider its  alternating type $A$-labeling. 
For each $1\leq i\leq n$ we identify the negative simple root $-\alpha_i$ 
with the diagonal having endpoints $(n-i+1)m$ and $(n-i+2)m$. 
Notice that again the negative simple roots form a ``snake'', and thus 
the  colored positive roots can be defined as in Section \ref{subsec:dis}. 
For instance, for  $m=3$ and $n=4$, the  negative simple roots  $-\alpha_1,-\alpha_2,-\alpha_3,-\alpha_4$ correspond 
to the diagonals $\{12,15\},\{9,12\},\{6,9\},\{3,6\}$ respectively (see Figure \ref{fig:Snake-bij}).

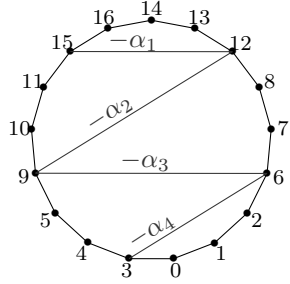
\begin{figure}
\begin{center}
\begin{tikzpicture}[scale=0.8]
\path (280.59:2 cm) coordinate (A0);\path (301.76:2 cm) coordinate (A1); 
\path (322.93:2 cm) coordinate (A2);\path (344.1:2 cm) coordinate (A3);
\path (365.27:2 cm) coordinate (A4);\path (26.44:2 cm) coordinate (A5); 
\path (47.61:2 cm) coordinate (A6);\path (68.78:2 cm) coordinate (A7);
\path (89.95:2 cm) coordinate (A8);\path (111.12:2 cm) coordinate (A9);
\path (132.29:2 cm) coordinate (A10);\path (153.46:2 cm) coordinate (A11); 
\path (174.63:2 cm) coordinate (A12);\path (195.8:2 cm) coordinate (A13);
\path (216.97:2 cm) coordinate (A14);\path (238.14:2 cm) coordinate (A15);
\path (259.31:2 cm) coordinate (A16);

\draw (A0)--(A1)--(A2)--(A3)--(A4)--(A5)--(A6)--(A7)--(A8)--(A9)--(A10)--(A11)--(A12)--(A13)--(A14)--(A15)--(A16)--(A0);

\node at (A0){\tiny $\bullet$};\node at (A1){\tiny $\bullet$};\node at (A2){\tiny$\bullet$};  
\node at (A3){\tiny $\bullet$};\node at (A4){\tiny $\bullet$};\node at (A5){\tiny $\bullet$};  
\node at (A6){\tiny $\bullet$};\node at (A7){\tiny $\bullet$};\node at (A8){\tiny $\bullet$}; 
\node at (A9){\tiny $\bullet$};\node at (A10){\tiny $\bullet$};\node at (A11){\tiny $\bullet$};
\node at (A12){\tiny $\bullet$};\node at (A13){\tiny $\bullet$};\node at (A14){\tiny $\bullet$}; 
\node at (A15){\tiny$\bullet$};\node at (A16){\tiny $\bullet$};  

\node at (280.59:2.2 cm){\footnotesize 0};\node at (301.76:2.2 cm){\footnotesize 1};\node at (322.93:2.2 cm){\footnotesize 2};  
\node at (344.1:2.2 cm){\footnotesize 6};\node at (365.27:2.2 cm){\footnotesize 7};\node at (26.44:2.2 cm){\footnotesize 8}; 
\node at (47.61:2.2 cm){\footnotesize 12};\node at (68.78:2.2 cm){\footnotesize 13};\node at (89.95:2.2 cm){\footnotesize 14}; 
\node at (111.12:2.2 cm){\footnotesize 16};\node at (132.29:2.2 cm){\footnotesize 15};\node at (153.46:2.2 cm){\footnotesize 11}; 
\node at (174.63:2.2 cm){\footnotesize 10};\node at (195.8:2.2 cm){\footnotesize 9};\node at (216.97:2.2 cm){\footnotesize 5}; 
\node at (238.14:2.2 cm){\footnotesize 4};\node at (259.31:2.2 cm){\footnotesize 3};   

\draw[black!80] (A3)--(A16); \draw[black!80] (A3)--(A13);\draw[black!80] (A6)--(A13); \draw[black!80] (A6)--(A10);

\node at (-0,-1.5){\rotatebox{33}{\tc{black!80}{$-\alpha_4$}}}; 
\node at (-0.1,-0.35){\tc{black!80}{$-\alpha_3$}}; 
\node at (-0.7,0.45){\rotatebox{33}{\tc{black!80}{$-\alpha_2$}}}; 
\node at (-0.3,1.65){\tc{black!80}{$-\alpha_1$}};
\end{tikzpicture}  
\caption{The alternating  type-$A$ labeling together with the  negative simple roots, for $m=3$ and $n=4$.} 
\label{fig:Snake-bij}
\end{center}
\end{figure}

\begin{proof}[Proof of Theorem~\ref{snakedisA}]
Consider  an $(m(n+1)+2)$-gon $P$ with alternating type-$A$ labeling. 
Let $D\in \dD^m_n$ be a dissection 
with diagonals $d_1,d_2,\dots, d_n$.  
Let also $\{t_1,t_2,\dots, t_n\}$ be the multiset consisting of all the initial points $t_i$. 
Without loss of generality we may assume that $t_1\geq t_2 \geq \cdots \geq t_n$ and proceed as we did with $\psi_n'$ 
(described in the beginning of this subsection). 
That is, we set $\psi_n(D)=(t_1,t_2,\ldots,t_n)$. 

We first need to prove that $\psi_n$ is well defined,
or equivalently that $t_i\leq m(n-i+1)$ for all $1\leq i\leq n$. 
Assume the contrary and let $i_0\in[n]$ be the greatest index for which $t_{i_0}>m(n-i_0+1)$. 
Thus $t_i\leq m(n-i+1)$ for all $i_0< i\leq n$.   
Since $m(n-i_0+1)<t_{i_0} \leq t_{i_0+1}\leq\cdots\leq t_n$, we deduce that 
the $m$-diagonals $d_1,d_2,\ldots, d_{i_0}$ should lie in the $(mi_0+2)$-gon 
defined by the diagonal corresponding to the root $-\alpha_{i_0}$ and the vertices 
$m(n-i_0+1),\ldots, m(n-i_0+2)$. But this is a contradiction, since an
$(mi_0+2)$-gon cannot contain $i_0$ many $m$-diagonals. 
Thus the map $\psi_n$ is well defined. 

To see that $\psi_n$ is a bijection we construct its inverse.
We proceed by induction on $n$, the case $n=1$ being trivial. 
Assume that we have constructed the bijection for $n-1$. 
Let $(\lambda_1,\lambda_2,\ldots,\lambda_n)$ be an element of $\pP^m_n$ and 
$P$ be an $(m(n+1)+2)$-gon with the alternating type-$A$ labeling. 
We will construct a dissection $D\in \dD^m_n$ of $P$ with set of initial points 
$\{\lambda_1,\lambda_2,\dots,\lambda_n\}$. We consider the vertex of $P$ 
which is labeled by $\lambda_1$. 
Among the two vertices which lie $(m+1)$-many vertices apart from $\lambda_1$, 
we denote by $\bar{\lambda}_1$ be the one with the greater label.  
Since $\lambda_1\leq mn$, it follows from the definition of the alternating labeling 
that $\bar{\lambda}_1\in\{mn+1,mn+2,\ldots,m(n+1)+1\}$, which implies that $\lambda_1<\bar{\lambda}_1$. 
We set $d_1$ to be the diagonal of $P$ with endpoints $\lambda_1$ and $\bar{\lambda}_1$. 
Clearly, $d_1$ dissects $P$ into a $(m+2)$-gon and a $(mn+2)$-gon, which we denote by $P_1$.  
Note that $P_1$ contains all the vertices $\lambda_i$, for $2\leq i\leq n$ 
and possibly some with greater labels. 
From the induction hypothesis, we can associate to the partition 
$(\lambda_2,\lambda_3,\dots,\lambda_n)$ a dissection of the polygon $P_1$ having diagonals 
$d_1,d_2,\dots, d_{n-1}$, where $d_i=\{\lambda_i<\bar{\lambda}_i\}$.  
We leave it to the reader to check that the map which sends $\lambda$ to 
the dissection containing the diagonals $d_1,d_2,\dots,d_n$ is indeed the inverse 
of $\psi_n$. By the representation of the negative simple roots as a ``snake'', 
it follows that a dissection contains the negative simple root $-\alpha_i$ 
if and only if it contains the diagonal $\{m(n-i+1),m(n-i+2)\}$. 
By induction on $n$ one can show that this occurs if and only if the partition 
$\psi_n(D)$ has $i$-th part equal to $(n-i+1)m$. 
\end{proof}


\section{Conclusion and ongoing work}
\label{sec:concl}
We complete this paper with the proof of Theorem \ref{propA}, which is a direct consequence of Theorem \ref{all partitions} and 
Theorem \ref{snakedisA}. We also present an application of the maps $\varphi_n$ and $\psi_n$ of Theorems \ref{all partitions} 
and \ref{snakedisA}. Finally, we briefly discuss  on our ongoing work and state an open problem. 

\begin{proof}[Theorem \ref{propA}]
We consider the map $\omega_n:\dD^m_n\to \rR^m_n$ with $\omega_n=\varphi^{-1}_n\circ\psi_n$. 
The result follows directly from Theorems 
\ref{all partitions} and \ref{snakedisA} 
\end{proof}

\subsection{Application}
Using the bijections provided in Theorems \ref{all partitions} and \ref{snakedisA}, 
we give a combinatorial proof of the 
fact that the number of facets of $\Delta^m_+(A_n)$ as well as the number of regions in ${\rm Cat}^m(A_n)$ 
is equal to $\frac{1}{n+1}\binom{m(n+1)+n-1}{n}$.
\begin{corollary}
\label{cor1}
$\#\dD^m_n=\#\rR^m_n=\frac{1}{n+1}\binom{m(n+1)+n-1}{n}$. 
\end{corollary}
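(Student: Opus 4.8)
The plan is to invoke Theorems~\ref{all partitions} and \ref{snakedisA} to identify both quantities with the cardinality of a single explicit subset of $\pP^m_n$, and then to evaluate that cardinality through the lattice-path dictionary recalled in the Introduction.

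First I would pin down the relevant partitions. A facet $D$ of $\Delta^m(A_n)$ lies in the positive part $\Delta^m_+(A_n)$ exactly when it contains no negative simple root, so Theorem~\ref{snakedisA} says that $\psi_n$ restricts to a bijection from the facets of $\Delta^m_+(A_n)$ onto
\[
\pP^{m,\circ}_n=\{\lambda\in\pP^m_n\mid \lambda_i<m(n-i+1)\text{ for every }i\in[n]\},
\]
the type-$A$ partitions with no maximal part. On the arrangement side, by the definition of a bounded region $R$ is bounded precisely when $k_{\alpha_i}<m$ for every simple root $\alpha_i$; since (as established in the proof of Theorem~\ref{all partitions}, via Lemma~\ref{decrease}) one has $\lambda_i=m(n-i+1)$ if and only if $k_{i,i}=m$, this is equivalent to $\varphi_n(R)\in\pP^{m,\circ}_n$, and equivalently to the statement that no hyperplane $H_{\alpha_i,m}$ with $\alpha_i$ simple is a separating wall of $R$. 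Hence $\varphi_n$ restricts to a bijection from the bounded dominant regions of ${\rm Cat}^m(A_n)$ onto $\pP^{m,\circ}_n$, and both numbers in the corollary equal $\#\pP^{m,\circ}_n$.

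It then remains to compute $\#\pP^{m,\circ}_n$. Encoding $\pP^m_n$ by $m$-Dyck paths of size $n+1$ as in the Introduction, a maximal part $\lambda_i=m(n-i+1)$ is precisely an interior contact of the path with the line $y=x/m$, so $\pP^{m,\circ}_n$ corresponds to the set of \emph{primitive} $m$-Dyck paths of size $n+1$ (those meeting $y=x/m$ only at their two endpoints). I would count these by a first-return decomposition: if $D(x)=\sum_{N\ge 0}\frac{1}{mN+1}\binom{(m+1)N}{N}x^N$ is the Fuss--Catalan series and $P(x)$ the generating function for primitive paths, cutting a path at its first contact with $y=x/m$ after the origin gives $D=1+PD$, whence $P=1-1/D=xD^m$ because $D-1=xD^{m+1}$. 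Extracting $[x^{n}]D^m$ (by Lagrange inversion, or via $[x^k]D^r=\frac{r}{(m+1)k+r}\binom{(m+1)k+r}{k}$) and using $(m+1)n+m=m(n+1)+n$ gives $\frac{m}{(m+1)n+m}\binom{(m+1)n+m}{n}=\frac{1}{n+1}\binom{m(n+1)+n-1}{n}$, as claimed; a cycle-lemma argument on the step word gives the same count and is closer in spirit to the rest of the paper. I do not expect a genuine obstacle here: the only points that need care are the translation of \emph{bounded region} into \emph{no maximal part} through the diagonal Shi-tableau entry $k_{i,i}$, and the recognition of $\pP^{m,\circ}_n$ as the set of prime Fuss--Catalan paths -- after which the corollary is a short computation, and in particular a bijective re-derivation of the positive type-$A$ Catalan number $N_+(A_n,m)$.
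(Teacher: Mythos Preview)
Your argument is correct and follows the same route as the paper: both reduce the two counts, via Theorems~\ref{all partitions} and~\ref{snakedisA}, to the cardinality of the set of partitions in $\pP^m_n$ with no maximal part, and then enumerate these as lattice paths that avoid interior contacts with the diagonal. The only difference is in presentation: the paper invokes \cite[Lemma~4.1]{ath-gcn-04} for the equivalence ``bounded $\Leftrightarrow$ no simple separating wall $H_{\alpha_i,m}$'' and cites \cite[Theorem~3]{gs-msrp-03} for the final count, whereas you derive the first directly from the definition of bounded region together with Lemma~\ref{decrease}, and obtain the count by a self-contained first-return/Lagrange-inversion computation (your identity $P=xD^m$ and the evaluation $[x^n]D^m=\frac{1}{n+1}\binom{m(n+1)+n-1}{n}$ are correct).
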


\begin{proof} 
From \cite[Lemma 4.1]{ath-gcn-04}  a  region in ${\rm Cat}^m (\Phi)$ is bounded if and only if it has 
no separating wall of type $ H_{\alpha, m}$ where $\alpha$ is a simple root.  
Let $\alpha_i$ be a simple root of type $A_n$. 
In view of Theorem \ref{all partitions}, the regions with no separating wall of type 
$H_{\alpha_i,m}$ biject to partitions $(\lambda_1,\lambda_2,\dots,\lambda_n)$ in $\pP^m_n$ 
for which $\lambda_i<m(n-i+1)$. On the other hand, in view of Theorem \ref{snakedisA}, 
the facets not containing the root $-\alpha_i$, biject to the same partitions as well. 
These partitions can be viewed as paths from $(0,0)$ to $(mn,n)$ which never touch the line $y=\frac{1}{m}x-1$, 
or equivalently, as paths from $(0,0)$ to $(n,m(n+1))$ 
which never touch the line $y=mx$ after the point $(0,0)$. 
From \cite[Theorem 3]{gs-msrp-03} 
we deduce that there are $\frac{1}{n+1}\binom{m(n+1)+n-1}{n}$ such paths. 
\end{proof} 

\subsection{Ongoing work}
In this paper we deal with the problem of finding a bijection between the set 
of dominant regions in the $m$-Catalan arrangement ${\rm Cat}^m(A_n)$ and that 
of facets of the $m$-generalized cluster complex $\Delta^m(A_n)$, where $m\in\mathbb{N}$. 
We further require the bijection to satisfy the property stated in Proposition \ref{ourproperty}. 
We answer this problem by providing a bijection which consists of two parts, 
where as intermediate step we use a certain set of integer  partitions. 
Moreover, we use these integer partitions for enumerating the bounded regions of ${\rm Cat}^m(A_n)$ 
and the facets of $\Delta^m_+(A_n)$.  

In \cite{fkt-12} we focus on types $B_n$ and $C_n$. So far we are able to characterize the set of facets 
of the generalized cluster complex $\Delta^m(B_n)$ and $\Delta^m(C_n)$ 
containing the negative simple root $-\alpha$, 
it terms of integers partitions. In particular we give a bijection between these sets of facets and the set 
$\pP^m(B_n)$ of partitions $(\lambda_1,\lambda_2,\dots,\lambda_n)$ for which $0\leq \lambda_i\leq mn$. 
The construction of the bijections from the set of dominant regions of 
the arrangements ${\rm Cat}^m(B_n)$ and ${\rm Cat}^m(C_n)$ to the set $\pP^m(B_n)$ is still in progress.

\subsection{Question}
Let $\Phi$ be a finite crystallographic root system. 
It would be very interesting to find a uniform bijection from the set $\dD^m(\Phi)$ of facets 
of the $m$-generalized cluster complex, to the set $\rR^m(\Phi)$ of dominant regions in the $m$-extended Catalan arrangement, 
which satisfies the property of Proposition \ref{ourproperty}.

\

\subsection*{Acknowledments}
We are grateful to Philippe Nadeau for helpful discussions and and Allesandro Conflitti for providing us the Formula (\ref{kij}). 
S. Fishel was  was partially supported by Simons Foundation grant  no. 209806 and NSF grant  no. 1200280.
M. Kallipoliti was funded by the FWF research grant no. Z130-N13. 
\bibliographystyle{plain}

\bibliography{mybibliography}

\end{document}